 \newtheorem{theorem}{Theorem}
 \newtheorem{proposition}{Proposition}
 \newtheorem{lemma}{Lemma}
 \newtheorem{corollary}{Corollary}
 \theoremstyle{definition}
 \newtheorem{example}{Example}
\newcommand\N{\mathbb{N}}
\newcommand\Z{\mathbb{Z}}
\newcommand\R{\mathbb{R}}
\newcommand\X{\mathbb{X}}
\begin{document}

\title{Decaying positive global solutions of second order difference equations with mean curvature operator
	\\[-2mm]}

\author{Zuzana Do\v sl\'a\footnote{Corresponding author. Email: dosla@math.muni.cz}, Serena Matucci and Pavel \v Reh\'ak
%\footnote{The research of the first and fourth author has been supported by the grant GA20-11846S of the Czech Science Foundation.}
}
\date{}
\maketitle \vspace*{-1.1cm}

\begin{center}
\begin{minipage}{10cm}
\begin{center}
\noindent
\textit{\small
Department of Mathematics and Statistics \\
Masaryk University, Brno, Czech Republic\\
\texttt{\small dosla@math.muni.cz}\\
Department of Mathematics and Informatics\\
University of Florence, Florence, Italy\\
\texttt{\small serena.matucci@unifi.it} \\	
Inst. of Math., FME, Brno University of Technology\\
Technick\'a 2,
CZ--61669 Brno,
Czech Republic
\texttt{\small rehak.pavel@fme.vutbr.cz}
}
\end{center}
\end{minipage}
\end{center}

\medskip
\begin{center}
to appear on \emph{Electronic Journal of Qualitative Theory of Differential Equations}
\end{center}
\medskip

\begin{abstract}
A boundary value problem on an unbounded domain,
associated to difference equations with the Euclidean mean curvature
operator is considered. The existence of solutions which are positive on the whole domain and decaying at infinity is examined by proving new Sturm comparison theorems for linear difference equations and using a fixed point approach based on a linearization device.
%The process from the continuous problem to discrete one is examined, too.
The process of discretization of the boundary value problem on the unbounded domain is examined, and some discrepancies between the discrete  and the continuous case are pointed out, too.
%A boundary value problem on the unbounded domain,
%associated to difference equations with the Euclidean mean curvature
%operator is considered. The existence of solutions which are positive on the whole domain and decaying at infinity is examined by proving new Sturm comparison theorems for linear difference equations and using fixed point approach based on a linearization device.
% Some discrepancies between the discrete case and continuous case are pointed out, too.
%\date{Month dd, yyyy\\ Revised Month dd, yyyy}
%\dedicated{
\end{abstract}
\begin{center}
\textit{Dedicated to the 75th birthday of Professor Jeff Webb}
\end{center}

\noindent\textbf{MSC 2010:} 39A22, 39A05, 39A12.

%\smallskip

\noindent\textbf{Keywords:} second order nonlinear difference equations, Euclidean
mean curvature operator, boundary value problems, decaying solutions, recessive solutions, comparison theorems.
\noindent

\section{Introduction} % Please enter the title of your first section (only the first letter of the title should be capital).

In this paper we study the boundary value problem (BVP) on the half-line for difference equation with the Euclidean mean curvature operator
\begin{equation}
\Delta\left(a_k\frac{\Delta x_k}{\sqrt{1+(\Delta x_k)^2}}\right)
+b_kF(x_{k+1})=0,
\label{E}
\end{equation}
subject to the conditions
\begin{equation}
x_m=c,\quad x_k>0,\quad \Delta x_k\le 0,
 \quad \displaystyle \lim_{k\to\infty}x_k=0,
\label{BVP}
\end{equation}
where $m \in \Z^{+}=\N \cup \{0\}$, $k\in\Z_m:=\{k \in \Z: \, k \geq m\}$ and $c\in(0,\infty)$.

Throughout the paper the following conditions are assumed:
\begin{itemize}
\item[(H$_{1}$)] The sequence $a$ satisfies $a_k>0$ for $k\in\Z_m$ and
\begin{equation*}
	\label{sum-a}
	\sum_{j=m}^{\infty}\frac{1}{a_j}<\infty.
\end{equation*}
\item[(H$_{2}$)] The sequence $b$ satisfies $b_k\ge 0$  for $k\in\Z_m$ and	
\begin{equation*}
\label{sum-a-b}
\sum_{j=m}^{\infty}b_j\sum_{i=j}^{\infty}\frac{1}{a_i}<\infty.
\end{equation*}
\item[(H$_{3}$)] The function $F$ is continuous on $\R$, $F(u)u>0$ for $ u\ne 0$, and
\begin{equation}
	\label{F}
\lim_{u\to 0+}\frac{F(u)}{u}<\infty.	
\end{equation}
\end{itemize}

When modeling real life phenomena, boundary value problems for second order differential equations play important role. The BVP \eqref{E}-\eqref{BVP} originates from the
discretisation process for searching radial solutions, which are globally positive and decaying,
for PDE with Euclidean  mean curvature operator.
By globally positive solutions we mean solutions which are positive  on the whole domain $\mathbb{Z}_{m}$. The Euclidean mean curvature operator arises in the study of some fluid mechanics problems, in particular
capillarity-type phenomena for compressible and incompressible fluids.

Recently, discrete BVPs, associated to equation \eqref{E}, have been widely studied, both in bounded and unbounded domains, see, e.g., \cite{abgo} and references therein.
Many of these papers can be seen as a finite dimensional variant of results established in the continuous case. For instance, we refer to \cite{Mawhin07,Mawhin08,MawhinTaylor,Mawhin2013} for BVPs involving mean curvature operators in Euclidean and Minkowski spaces, both in the continuous  and in the  discrete case.  Other results in this direction are in \cite{BC2008,BC2009}, in which  the multiplicity of solutions of certain BVPs involving the $p$-Laplacian is examined.  Finally, in \cite{JDEA2016,DS} for second order equations with $p$-Laplacian the existence of  globally positive decaying Kneser solutions, that is solutions $x$ such that $x_n>0$, $\Delta x_n<0$ for $n\geq 1$ and $\lim_{n\to\infty}x_n=0$, is examined.

Several approaches have been used in literature for treating the above problems. Especially, we refer to variational methods \cite{Mawhin2012}, the critical point theory \cite{BC2009} and fixed point theorems on cones \cite{Webb06,Webb17}.

Here, we extend to second order difference equations with Euclidean mean curvature some results on  globally positive decaying Kneser solutions stated in \cite{JDEA2016} for equations with $p$-Laplacian and $b_n<0$.

This paper  is motivated also by \cite{Trieste}, in which BVPs for differential equation with the Euclidean mean curvature operator on the half-line $[1,\infty)$ have been studied subjected to the boundary conditions $x(1)=1$ and $\lim_{t\to\infty}x(t)=0$. The study in \cite{Trieste} is accomplished by using a linearization device and some properties of principal solutions of certain disconjugate second-order linear differential equations. Here,  we consider the discrete setting of  the problem studied in  \cite{Trieste}. However, the discrete analogue presented here requires different technique. This is caused by a different behavior of decaying solutions as well as by peculiarities of the discrete setting which lead to a modified fixed point approach. Jointly with this, we prove new Sturm comparison theorems and new properties of recessive solutions for linear difference equations.
Our existence result is based on a fixed point theorem for operators defined in a Fr\' echet space by a Schauder's linearization device. This method is originated in \cite{CFM}, later extended to the discrete case in \cite{MMR2}, and recently developed in \cite{Fixed}. This tool does not require the explicit form of the fixed point operator $T$ and simplifies the check of the topological properties of $T$ on the unbounded domain, since these properties become an immediate consequence of \textit{a-priori} bounds for an associated linear equation. These bounds are obtained in an implicit form by means of the concepts of recessive solutions for second order linear equations. The main properties and results which are  needed in our arguments, are presented in Sections \ref{S2} and \ref{S3}. In Section \ref{S4} the solvability of the BVP (\ref{E})-(\ref{BVP}) is given, by assuming some implicit conditions on sequences $a$ and $b$.
Several effective criteria are given, too. These criteria are obtained  by considering
suitable linear equations which can be viewed as Sturm majorants of the auxiliary linearized equation.
In Section \ref{S5} we compare our results with those stated in the continuous case in \cite{Trieste}.
Throughout the paper we emphasis some discrepancies, which arise between the continuous case and the discrete one.

\section{Discrete versus continuous decay}\label{S2}

Several properties in the discrete setting have no continuous analogue. For instance, for a positive sequence $x$ we always have
\[
\frac{\Delta x_k}{x_k}=\frac{x_{k+1}}{x_k}-1>-1.
\]
In the continuous case, obviously, this does not occur in general, and the decay can be completely different. For example, if $x(t)=e^{-2t}$ then $x'(t)/x(t)=-2$ for all $t$. Further, the ratio $x'/x$ can be also unbounded from below, as the function  $x(t)=\mathrm{e}^{-\mathrm{e}^t}$ shows.

Another interesting observation is the following. If two positive continuous functions $x,y$ satisfy the inequality
\[
\frac{x'(t)}{x(t)}\le M\frac{y'(t)}{y(t)}, \ \ t\geq t_0,
\]
then there exists $K>0$ such that $x(t)\le Ky^M(t)$ for $t\geq t_0$.
This is not true in the discrete case, as the following example illustrates.
%Example 2.1 below illustrates.

\begin{example}
Consider the sequences $x,y$ given by
\[
x_k=\frac{1}{2^{2^k}}, \quad y_k=\frac{1}{2^{2^{k+2}}}.
\]
Then
\[
\frac{x_{k+1}}{x_k}=\frac{1}{2^{2^k}}, \quad \frac{y_{k+1}}{y_k}=\frac{1}{2^{2^{k+2}}},
\]
and
\[
\frac{\Delta x_{k}}{x_k}= \frac{1}{2^{2^k}}-1\leq \frac{1}{2}-1=- \frac{1}{2}\leq \frac{1}{2}\left( \frac{1}{2^{2^{k+2}}}-1\right)= \frac{1}{2}\frac{\Delta y_{k}}{y_k}
\]
 On the other hand, the inequality $x_k\le K y_k^{1/2}$ is false for every value of $K>0$. Indeed,
 \[
 \frac{x_k}{\sqrt{y_k}}= \frac{2^{2^{k+1}}}{2^{2^{k}}}=2^{2^{k}}
\]
which is clearly unbounded.
\end{example}

The situation in the discrete case is described in the following two lemmas.
\begin{lemma}
	\label{L:A}
Let $x,y$ be positive sequences on $\Z_m$ such that $M\in(0,1)$ exists, satisfying
\begin{equation}\label{xy}
\frac{\Delta x_k}{x_k}\le M\frac{\Delta y_k}{y_k}	
\end{equation}
for $k\in\Z_m$. Then $1+M\Delta y_k/y_k>0$ for $k\in\Z_m$, and
$$
x_k\le x_m\prod_{j=m}^{k-1}\left(1+M\frac{\Delta y_j}{y_j} \right).
$$
\end{lemma}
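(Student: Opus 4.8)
The plan is to exploit the elementary identity $\Delta x_k/x_k = x_{k+1}/x_k-1>-1$, valid for any positive sequence, together with the hypothesis \eqref{xy}. First I would establish the positivity claim: since $x$ is positive we have $-1<\Delta x_k/x_k$, and combining this with \eqref{xy} gives $-1<M\Delta y_k/y_k$, i.e.\ $1+M\Delta y_k/y_k>0$ for all $k\in\Z_m$.

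Next I would rewrite \eqref{xy} in multiplicative form. Adding $1$ to both sides of \eqref{xy} yields
$$
\frac{x_{k+1}}{x_k}=1+\frac{\Delta x_k}{x_k}\le 1+M\frac{\Delta y_k}{y_k},
$$
where now both sides are strictly positive — the left-hand side because $x>0$, the right-hand side by the first step. Then I would telescope: for $k>m$,
$$
x_k=x_m\prod_{j=m}^{k-1}\frac{x_{j+1}}{x_j},
$$
and since each factor $x_{j+1}/x_j$ is a positive number not exceeding the positive quantity $1+M\Delta y_j/y_j$, the product of the ratios is bounded above by the product of these factors, which gives the asserted inequality. For $k=m$ the product is empty and equals $1$, so the inequality reduces to $x_m\le x_m$.

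The only point requiring care — and the reason the positivity statement must be proved first — is that passing from $0<\alpha_j\le\beta_j$ to $\prod\alpha_j\le\prod\beta_j$ is valid precisely because all the factors are positive; monotonicity of the finite product over the positive reals is what legitimizes the telescoping step. I do not expect any substantial analytic obstacle beyond this sign bookkeeping; the hypothesis $M\in(0,1)$ itself is not even needed here (only $M>0$ together with positivity of $x$), but it will be relevant in the companion lemma.
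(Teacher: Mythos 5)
Your proof is correct and follows essentially the same route as the paper's: rewrite the hypothesis \eqref{xy} in the multiplicative form $x_{k+1}/x_k\le 1+M\Delta y_k/y_k$ and telescope the product from $m$ to $k-1$. The only (harmless) difference is the positivity step, where the paper computes $1+M\Delta y_k/y_k=(1-M)+M\,y_{k+1}/y_k>0$ directly from $M<1$ and $y>0$, while you deduce it from \eqref{xy} together with $\Delta x_k/x_k>-1$; both are valid, and your remark that $M<1$ is not actually needed for this lemma is accurate.
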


\begin{proof}
First of all note that, from $M \in (0,1)$ and the positivity of $y$, we have
\[
1+M\frac{\Delta y_k}{y_k}=1+M\frac{y_{k+1}}{y_k}-M>0, \quad k \in \Z_m.
\]
From \eqref{xy} we get
\[
\frac{x_{k+1}}{x_k}\le 1+M\frac{\Delta y_k}{y_k},
\]
and taking the product from $m$ to $k-1$, $k>m$,  we obtain
$$
\frac{x_k}{x_m}=\frac{x_{m+1}}{x_m}\,\frac{x_{m+2}}{x_{m+1}}\cdots\frac{x_k}{x_{k-1}}\le \prod_{j=m}^{k-1}\left(1+M\frac{\Delta y_j}{y_j} \right).
$$
\end{proof}

From the classical theory of infinite products (see for instance \cite{Knopp}) the infinite product $P=\prod_{k=m}^\infty (1+q_k)$
of real numbers is said to \emph{converge} if there is $N\in\Z_m$ such that $1 +q_k \neq 0$ for $k\geq N$ and
\[
P_n=\prod_{k=N}^n (1 +q_k)
\] has a \emph{finite and nonzero} limit as $n \to \infty$.

In case $-1<q_k\leq0$,  $\{P_n\}$  is a positive nonincreasing sequence, thus $P$ being \emph{divergent} (not converging to a nonzero number) means that
\begin{equation}\label{limP}
\lim _{n\to\infty} \prod_{k=N}^n (1 +q_k) =0.
\end{equation}
Moreover, the convergence of  $P$ is equivalent to the convergence of the series
$\sum_{k=N}^\infty \ln (1+q_k)$ and this is equivalent to the convergence of the series
$\sum_{k=N}^\infty  q_k$. Indeed,
if $\sum_{k=m}^{\infty} q_k$ is convergent, then
$\lim_{k\to\infty} q_k=0$ and hence,
%and hence %Hence, $\lim\dfrac{\ln(1+q_k)}{q_k}=1$,  i.e., $\ln(1+q_k)\sim q_k$ as $k\to\infty$.
%Since summing preserves asymptotic equivalence, we get that $\sum_{k=m}^{\infty}\ln(1+q_k)$ converges. Similarly we obtain the opposite direction.
% the convergence implies $\lim_{k\to\infty}q_k=0$, thus
\[
\lim_{k\to\infty} \frac{\ln (1+q_k)}{q_k}=1,
\]
i.e., $\ln(1+q_k)\sim q_k$ as $k\to\infty$.
Since summing preserves asymptotic equivalence, we get that $\sum_{k=m}^{\infty}\ln(1+q_k)$ converges. Similarly, we obtain the opposite direction.

Therefore, in case $-1<q_k\leq0$,  \eqref{limP} holds
if and only if $\sum_{k=N}^\infty q_k$ diverges to $-\infty$.

The following holds.

\begin{lemma} \label{L:B}
Let $y$ be a positive nonincreasing sequence on $\Z_m$ such that
$\lim_{k\to\infty}y_k=0$. Then, for any $M\in(0,1)$,
$$
\lim_{k\to\infty}\prod_{j=m}^{k}\left(1+M\frac{\Delta y_j}{y_j} \right)=0.
$$	
\end{lemma}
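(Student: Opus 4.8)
The plan is to reduce the claim to the product–series dictionary for infinite products recalled just before the statement, which will be applied twice. Put $r_j=y_{j+1}/y_j$; since $y$ is positive and nonincreasing we have $0<r_j\le 1$, so that $\Delta y_j/y_j=r_j-1\in(-1,0]$ and likewise $M\Delta y_j/y_j\in(-1,0]$ for every $M\in(0,1)$ (the strict inequality $1+M\Delta y_j/y_j>0$ being the one already verified at the start of Lemma~\ref{L:A}). Thus the factors of the product in question fall under the case ``$-1<q_k\le0$'' of that discussion, and the desired conclusion $\prod_{j=m}^{k}(1+M\Delta y_j/y_j)\to0$ is equivalent to $\sum_{j=m}^{\infty}M\Delta y_j/y_j=-\infty$, that is, since $M>0$ is a fixed constant, to $\sum_{j=m}^{\infty}\Delta y_j/y_j=-\infty$.

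So the only thing to establish is that $\sum_{j=m}^{\infty}\Delta y_j/y_j$ diverges to $-\infty$, and here I would invoke the same equivalence in the opposite direction. Consider the product $\prod_{j=m}^{k}(1+\Delta y_j/y_j)=\prod_{j=m}^{k}r_j$; it telescopes to $y_{k+1}/y_m$, which tends to $0$ by the hypothesis $\lim_{k\to\infty}y_k=0$. Hence the infinite product $\prod_{j\ge m}(1+\Delta y_j/y_j)$ is divergent (to $0$), and since $-1<\Delta y_j/y_j\le0$ this forces $\sum_{j=m}^{\infty}\Delta y_j/y_j=-\infty$. Multiplying by the constant $M$ and feeding the result back into the equivalence of the first paragraph finishes the proof.

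I do not expect any real obstacle: the argument is essentially two applications of the product–series dictionary from the preamble, glued by the telescoping identity $\prod_{j=m}^{k}y_{j+1}/y_j=y_{k+1}/y_m$. The only places asking for a line of care are checking that all the quantities involved stay in $(-1,0]$ so that the dictionary applies, and noting that the telescoped product genuinely tends to $0$ rather than merely staying bounded. Should one wish to avoid the second appeal to the dictionary, the final step can be made elementary: from $\sum_{j\ge m}(1-r_j)=+\infty$ and the inequality $\ln\bigl(1-M(1-r_j)\bigr)\le-M(1-r_j)$, valid since $0\le M(1-r_j)<1$, one obtains $\ln\prod_{j=m}^{k}(1+M\Delta y_j/y_j)=\sum_{j=m}^{k}\ln\bigl(1-M(1-r_j)\bigr)\le-M\sum_{j=m}^{k}(1-r_j)\to-\infty$.
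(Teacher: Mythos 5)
Your proof is correct, but it takes a genuinely different route from the paper's. The paper proves $\sum_{j}\Delta y_j/y_j=-\infty$ directly, by splitting into two cases: when the ratios $y_{k+1}/y_k$ are bounded below by some $N>0$ it applies the Lagrange mean value theorem to $\ln$ to get $-\Delta\ln y_k\le-\Delta y_k/(Ny_k)$ and sums, using $\ln y_n\to-\infty$; when $\inf_k y_{k+1}/y_k=0$ it notes $\liminf_k\Delta y_k/y_k=-1$, so the nonpositive terms do not tend to zero and the series must diverge. You instead obtain the same divergence in one stroke by observing that $\prod_{j=m}^{k}(1+\Delta y_j/y_j)$ telescopes to $y_{k+1}/y_m\to0$ and then invoking, in the reverse direction, the very equivalence (``for $-1<q_k\le0$, the partial products tend to $0$ iff $\sum q_k=-\infty$'') that the preamble states as an ``if and only if''; a second, forward application of the same equivalence to $q_j=M\Delta y_j/y_j$ finishes the argument. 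Your version is shorter, avoids the case distinction entirely, and isolates the real content of the lemma as the telescoping identity; its only cost is that it leans twice on the product--series dictionary, including the direction whose justification (limit comparison of $\ln(1+q_k)$ with $q_k$ for one-signed terms) the preamble sketches rather than proves in detail -- which is presumably why the authors preferred a self-contained two-case argument. Your optional elementary ending via $\ln(1-M(1-r_j))\le-M(1-r_j)$ is also sound and removes the second appeal to the dictionary, though not the first.
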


\begin{proof}
From the theory of infinite products  it is sufficient to show that
\begin{equation}
\sum_{j=m}^{\infty}\frac{\Delta y_j}{y_j}=-\infty.
\label{div}
\end{equation}
We distinguish two cases:\\
1) there exists $N>0$ such that $y_{k+1}/y_k\ge N$ for $k\in\Z_m$;\\
2) $\inf_{k\in\Z_m}y_{k+1}/y_k=0$.

As for the former case, from the Lagrange mean value theorem, we have
$$
-\Delta \ln y_k=-\frac{\Delta y_k}{\xi_k}\le-\frac{\Delta y_k}{y_{k+1}}=
-\frac{\Delta y_k}{y_k}\cdot\frac{y_k}{y_{k+1}}
\le -\frac{\Delta y_k}{Ny_k},
$$
where $\xi_k$ is such that $y_{k+1}\le\xi_k\le y_k$ for $k\in\Z_m$. Summing the above inequality from $m$ to $n-1$, $n>m$,   we get
$$
\ln y_m-\ln y_n \le -\frac{1}{N}\sum_{j=m}^{n-1}\frac{\Delta y_j}{y_j}.
$$
Since  $\lim_{n\to\infty}y_n=0$, letting $n \to \infty$ we get (\ref{div}).

Next we deal with the case  $\inf_{k\in\Z_m}y_{k+1}/y_k=0$. This is equivalent to
	\[\liminf_{k\to\infty}\frac{\Delta y_{k}}{y_k}=\liminf_{k\to\infty} \frac{y_{k+1}}{y_k}-1=-1,\]
	which implies (\ref{div}),
since
$\sum_{j=m}^{k}\Delta y_j/y_j$ is negative nonincreasing.

\end{proof}

\section{A Sturm-type comparison theorem for linear equations}\label{S3}

The main idea of our approach is based on an application
of a fixed point theorem and on global monotonicity properties of recessive solutions
of linear equations.  To this goal,  in this section we prove a new Sturm-type comparison theorem for linear difference equations.

Consider the linear equation
\begin{equation} \label{Lmin}
\Delta(r_k\Delta y_k)+p_k y_{k+1}=0,
\end{equation}
where $p_k\ge 0$ and $r_k>0$ on $\Z_m$.
We say that a solution $y$ of equation \eqref{Lmin} has a generalized zero in $n$
if  either $y_n= 0$ or $y_{n-1}y_{n}< 0$, see e.g. \cite{Agarwal, abe:dis}.
A (nontrivial) solution $y$ of \eqref{Lmin} is said to be \textit{nonoscillatory}
if $y_ky_{k+1}>0$ for all large $k$. Equation \eqref{Lmin} is said to be \textit{nonoscillatory}
if all its nontrivial solutions are nonoscillatory.
It is well known that, by the Sturm type separation theorem, the nonoscillation of \eqref{Lmin}
 is equivalent to the existence of a nonoscillatory
solution  see e.g. \cite[Theorem~1.4.4]{abgo}, \cite{abe:dis}.

If \eqref{Lmin} is nonoscillatory, then there exists a nontrivial solution $u$, uniquely determined up to a constant factor,
such that
$$
\lim_{k\to\infty}\frac{u_k}{y_k}=0,
$$
where $y$ denotes an arbitrary nontrivial solution of \eqref{Lmin}, linearly
independent of $u$. Solution $u$ is called \textit{recessive solution} and $y$ a \textit{dominant solution}, see e.g. \cite{ahlbrandt}.
Recessive solutions can be characterized in the following ways (both these properties are proved in \cite{ahlbrandt}):\\
(i)
A solution $u$ of \eqref{Lmin} is recessive if and only if
\begin{equation*}\label{int_char}
\sum_{j=m}^{\infty}\frac{1}{r_ju_ju_{j+1}}=\infty.
\end{equation*}
(ii)
For a recessive solution $u$ of \eqref{Lmin} and any linearly independent solution $y$ (i.e. dominant solution) of \eqref{Lmin}, one has
\begin{equation} \label{minimal}
\frac{\Delta u_k}{u_k}< \frac{\Delta y_k}{y_k} \qquad\text{ eventually.}
\end{equation}

Along with equation \eqref{Lmin} consider
the equation
\begin{equation} \label{Lmaj}
\Delta(R_k\Delta x_k)+P_k x_{k+1}=0	
\end{equation}
where $P_k\ge p_k\ge 0$ and $0<R_k\le r_k$ on $\Z_m$; equation \eqref{Lmaj} is  said to be a \textit{Sturm majorant} of \eqref{Lmin}.

From \cite[Lemma~1.7.2]{abgo}, it follows that if \eqref{Lmaj} is nonoscillatory, then \eqref{Lmin} is nonoscillatory as well. In this section we always assume that \eqref{Lmaj} is nonoscillatory.

\medskip
The following two propositions are slight modifications of results in \cite{dos-reh}. They are preparatory to the main comparison result.

\begin{proposition}[{\cite[Lemma 2]{dos-reh}}] \label{P:0}
Let $x$ be a positive solution of \eqref{Lmaj} on $\Z_m$ and $y$ be a solution
of \eqref{Lmin} such that $y_m>0$ and $r_m\Delta y_m/y_m\ge R_m\Delta x_m/x_m$. Then
$$
y_k>0 \quad\text{ and } \quad \frac{r_k\Delta y_k}{y_k}\ge\frac{R_k\Delta x_k}{x_k}, \ \text{ for  } k\in\Z_m.
$$
Moreover, if $y,\bar y$ are solutions of \eqref{Lmin} such that $y_k>0$, $k\in\Z_{m}$, and $\bar y_m>0$,
$\Delta \bar y_m/\bar y_m>\Delta y_m/y_m$, then
$$
\bar y_k>0 \quad\text{ and } \quad \frac{\Delta\bar y_k}{\bar y_k}>\frac{\Delta y_k}{y_k}, \ \text{ for } k\in\Z_m.
$$
\end{proposition}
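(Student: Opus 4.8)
The plan is to reduce the second-order equations to a Riccati-type first-order comparison for the logarithmic derivatives $w_k = R_k\Delta x_k/x_k$ and $v_k = r_k\Delta y_k/y_k$, and then run a discrete induction on $k$. First I would write down the Riccati equation satisfied by $w$. Dividing \eqref{Lmaj} by $x_{k+1}$ and using $x_{k+1}/x_k = 1 + \Delta x_k/x_k = 1 + w_k/R_k$, a standard computation yields
\[
\Delta w_k = -P_k - \frac{w_k^2/R_k}{1 + w_k/R_k} = -P_k - \frac{w_k^2}{R_k + w_k},
\]
valid as long as $x_k>0$ (equivalently $1+w_k/R_k>0$, i.e. $R_k + w_k > 0$). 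The analogous identity holds for $v_k$ with $p_k, r_k$ in place of $P_k, R_k$. The function $t \mapsto t^2/(s+t)$ is increasing in $t$ on $(0,\infty)$ and decreasing in the parameter $s$, which is exactly the monotonicity we need since $R_k \le r_k$ and $P_k \ge p_k$.

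Next I would carry out the induction. The hypothesis gives $v_m \ge w_m$ and $y_m > 0$, so in particular $r_m + v_m \ge r_m > 0$ (using positivity of $y$ and $y_{m+1}/y_m = 1 + v_m/r_m$, so actually $v_m > -r_m$; combined with $v_m \ge w_m$ and the fact that $x$ positive forces $w_m > -R_m$, both Riccati equations are well-posed at $m$). Assume $y_k > 0$ and $v_k \ge w_k$. From the Riccati identities,
\[
v_{k+1} - w_{k+1} = (v_k - w_k) + (P_k - p_k) + \left(\frac{w_k^2}{R_k + w_k} - \frac{v_k^2}{r_k + v_k}\right).
\]
The first two terms on the right are $\ge 0$. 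For the third, $w_k^2/(R_k+w_k) \ge w_k^2/(r_k + w_k) \ge v_k^2/(r_k+v_k)$ using $R_k \le r_k$ and the monotonicity of $t\mapsto t^2/(r_k+t)$ together with $v_k \ge w_k > -R_k \ge -r_k$; hence the bracket is $\ge 0$ and $v_{k+1} \ge w_{k+1}$. It remains to check $y_{k+1} > 0$: from the Riccati equation for $v$, $v_{k+1} = v_k - p_k - v_k^2/(r_k+v_k) = r_k v_k/(r_k + v_k) \cdot(\text{something})$ — more directly, $1 + v_{k+1}/r_{k+1}$ can be shown positive because $v_{k+1} \ge w_{k+1}$ and $w_{k+1} > -R_{k+1} \ge -r_{k+1}$ (the latter from positivity of $x$, which is assumed throughout). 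So $y_{k+2}/y_{k+1} = 1 + v_{k+1}/r_{k+1} > 0$, and since $y_{k+1} > 0$ we get $y_{k+2} > 0$, closing the induction. This proves the first assertion.

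For the second assertion, I would apply the same argument with the roles played by $y$ (now as the ``minorant'', so set $R_k = r_k$, $P_k = p_k$) and $\bar y$: the hypothesis $\Delta\bar y_m/\bar y_m > \Delta y_m/y_m$ is $\bar v_m > v_m$ with strict inequality, and an identical induction — now every inequality in the chain is strict or preserved — gives $\bar v_k > v_k$ and $\bar y_k > 0$ for all $k \in \Z_m$; dividing through by $r_k > 0$ gives the stated conclusion. The main obstacle I anticipate is purely bookkeeping: one must be careful that at each step the quantities $R_k + w_k$ and $r_k + v_k$ stay strictly positive so that the Riccati expressions (and the corresponding ratios $x_{k+1}/x_k$, $y_{k+1}/y_k$) are legitimate — this is where the positivity of $x$ (assumed) and the inductively-propagated positivity of $y$ are both essential, and the two must be threaded together at each induction step rather than established separately. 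Since this is exactly \cite[Lemma 2]{dos-reh}, the cited reference handles these details, and here it suffices to indicate the Riccati substitution and the monotonicity of $t\mapsto t^2/(s+t)$ as the engine of the comparison.
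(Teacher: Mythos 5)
The paper does not reprove this proposition --- it is imported verbatim as \cite[Lemma 2]{dos-reh} --- so your Riccati argument is being judged on its own. The strategy (pass to $w_k=R_k\Delta x_k/x_k$, $v_k=r_k\Delta y_k/y_k$ and induct) is the right one, but the induction step as written fails. You split
\[
v_{k+1}-w_{k+1}=(v_k-w_k)+(P_k-p_k)+\left(\frac{w_k^2}{R_k+w_k}-\frac{v_k^2}{r_k+v_k}\right)
\]
and assert that the bracket is $\ge 0$ via $w_k^2/(r_k+w_k)\ge v_k^2/(r_k+v_k)$ for $v_k\ge w_k$. That inequality needs $t\mapsto t^2/(r_k+t)$ to be \emph{decreasing}, whereas (as you note yourself at the outset) it is \emph{increasing} on $(0,\infty)$: its derivative is $t(t+2r_k)/(r_k+t)^2$, negative only for $-r_k<t<0$. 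With $r_k=R_k=1$, $w_k=1$, $v_k=2$ the bracket equals $1/2-4/3<0$, so the term-by-term argument collapses; and since the proposition assumes nothing about the sign of $\Delta x_k$ or $\Delta y_k$, you cannot restrict to the range $w_k,v_k\le 0$ where the function is decreasing.

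The conclusion survives if you do not separate the linear term from the quadratic one. The recursion reads $v_{k+1}=-p_k+h(v_k;r_k)$ and $w_{k+1}=-P_k+h(w_k;R_k)$ with $h(t;\rho)=t\rho/(\rho+t)$, and $h$ has exactly the monotonicity you need: $\partial h/\partial t=\rho^2/(\rho+t)^2>0$, and $h(t;\rho)-h(t;\sigma)=t^2(\rho-\sigma)/\bigl((\rho+t)(\sigma+t)\bigr)\ge 0$ for $\rho\ge\sigma$ and $t>-\sigma$. Hence $v_{k+1}\ge -P_k+h(w_k;r_k)\ge -P_k+h(w_k;R_k)=w_{k+1}$, and the strict positivity of $\partial h/\partial t$ yields the strict version needed for the second assertion. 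Your threading of the positivity conditions ($w_k>-R_k$ from $x_k>0$, hence $v_k\ge w_k>-R_k\ge -r_k$, hence $r_k+v_k>0$ and $y_{k+1}>0$) is correct and can stay as is.
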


\begin{proposition}[{\cite[Theorem 3]{dos-reh}}] \label{Prop1}
If a recessive solution $v$ of \eqref{Lmin} has a generalized zero in $N\in \Z_m$
and has no generalized zero in $(N,\infty)$, then any solution of \eqref{Lmaj} has a generalized zero in
$(N-1,\infty)$.
\end{proposition}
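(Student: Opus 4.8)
The plan is a proof by contradiction. Suppose first $N>m$, and suppose some solution $x$ of \eqref{Lmaj} has no generalized zero in $(N-1,\infty)$; after a sign change $x_k>0$ for all $k\ge N-1$, and we may normalize the recessive solution $v$ so that $v_k>0$ for $k\ge N$. Since $v$ has a generalized zero at $N$ but $v_N>0$, necessarily $v_{N-1}v_N<0$, so $v_{N-1}<0$, and hence
\[
r_{N-1}\frac{\Delta v_{N-1}}{v_{N-1}}<-r_{N-1},\qquad\text{that is,}\qquad r_{N-1}+r_{N-1}\frac{\Delta v_{N-1}}{v_{N-1}}=r_{N-1}\frac{v_N}{v_{N-1}}<0.
\]
For a solution $y$ of \eqref{Lmin} with $y_ky_{k+1}\ne0$, the substitution $w_k=r_k\Delta y_k/y_k$ gives the Riccati recursion $w_{k+1}=-p_k+r_kw_k/(r_k+w_k)$, and $r_k+w_k=r_ky_{k+1}/y_k$ has the sign of $y_ky_{k+1}$; likewise $W_k:=R_k\Delta x_k/x_k$ solves the Riccati recursion of \eqref{Lmaj} on $\Z_{N-1}$ with $r_k+W_k\ge R_k+W_k=R_kx_{k+1}/x_k>0$, so in particular $W_{N-1}>-R_{N-1}\ge-r_{N-1}$.

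The key step is to transplant $W$ onto \eqref{Lmin}. Let $\hat y$ be the solution of \eqref{Lmin} with $\hat y_{N-1}=x_{N-1}$ and $r_{N-1}\Delta\hat y_{N-1}/\hat y_{N-1}=W_{N-1}$. On $\Z_{N-1}$ equation \eqref{Lmaj} is still a Sturm majorant of \eqref{Lmin} and $x$ is positive there, so Proposition~\ref{P:0} applies and yields $\hat y_k>0$ and $\hat w_k:=r_k\Delta\hat y_k/\hat y_k\ge W_k$ for all $k\ge N-1$. Thus $\hat y$ solves \eqref{Lmin} and is positive on $\Z_{N-1}$, its Riccati quotient $\hat w$ stays on the branch where $r_k+\hat w_k>0$, and $\hat w_{N-1}=W_{N-1}>-r_{N-1}$.

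Now I compare $\hat w$ with the Riccati quotient $w^v_k:=r_k\Delta v_k/v_k$ of $v$ across the index $N$. Writing $h(w)=rw/(r+w)=r-r^2/(r+w)$, one checks at once that $-p+h(w)<r-p$ when $w>-r$ and $-p+h(w)>r-p$ when $w<-r$. Applying this with $r=r_{N-1}$, $p=p_{N-1}$ to $\hat w_{N-1}>-r_{N-1}$ and to $w^v_{N-1}<-r_{N-1}$ (the latter being the first display above, and noting that the Riccati recursion from $N-1$ to $N$ holds for $v$ as well, since $v_{N-1},v_N\ne0$), we obtain
\[
\hat w_N<r_{N-1}-p_{N-1}<w^v_N.
\]

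From the index $N$ on, both $\hat w$ and $w^v$ solve the same Riccati recursion of \eqref{Lmin} and stay on the branch where $r_k+w_k>0$ ($\hat w$ by the preceding paragraph, $w^v$ because $v_k>0$ on $\Z_N$), and the Riccati map is increasing on that branch, so the strict inequality propagates: $\hat w_k<w^v_k$ for all $k\ge N$. But $\hat y$, being positive on $\Z_{N-1}$ while $v_{N-1}<0$, is not a scalar multiple of $v$, hence is a dominant solution of \eqref{Lmin}; by \eqref{minimal} this forces $\Delta v_k/v_k<\Delta\hat y_k/\hat y_k$, that is $w^v_k<\hat w_k$, for all large $k$ — contradicting the previous line. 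This proves the proposition for $N>m$; the case $N=m$ is the same, run from the index $m$, where a generalized zero of $v$ means $v_m=0$, so $r_m\Delta v_m=r_mv_{m+1}$ and the recursion gives $w^v_{m+1}=r_m-p_m$, which takes the place of $w^v_N$ above. The point I expect to be most delicate is the Riccati-branch bookkeeping around $N$: the quotient $w^v$ sits on the branch $r_k+w<0$ at $k=N-1$ but on $r_k+w>0$ for $k\ge N$, and it is exactly the one-step estimate through $N$ that converts the hypothesis on the last generalized zero of $v$ into the usable inequality $\hat w_N<w^v_N$. One must take care to invoke monotonicity of the Riccati map only on the branch where it holds, and to note that an estimate valid merely on $\Z_N$ may legitimately contradict the eventual inequality in \eqref{minimal}, which is so because two Riccati solutions of \eqref{Lmin}, once strictly ordered at one index of that branch, remain strictly ordered at all later indices.
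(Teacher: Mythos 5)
The paper does not prove this proposition at all: it is imported verbatim from \cite[Theorem 3]{dos-reh}, so there is no in-paper argument to compare against. Your Riccati-based comparison argument is the natural (and almost certainly the intended) route, and its core is sound: the one-step estimate $-p+h(w)\lessgtr r-p$ according to the sign of $r+w$, the transplant of the Riccati quotient of $x$ onto \eqref{Lmin} via Proposition~\ref{P:0}, the propagation of the strict ordering on the branch $r_k+w>0$, and the final contradiction with \eqref{minimal} are all correct.

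There is, however, a genuine (if repairable) gap in the case analysis for $N>m$. First, a generalized zero of $v$ at $N$ can be realized by $v_N=0$ rather than by $v_{N-1}v_N<0$: the hypothesis ``no generalized zero in $(N,\infty)$'' only forces $v_{N+1}\neq 0$ and $v_Nv_{N+1}\ge 0$, which does not exclude $v_N=0$. In that case your normalization $v_k>0$ for $k\ge N$ and the inequality $w^v_{N-1}<-r_{N-1}$ are unavailable; one must instead compute $w^v_{N+1}=r_N-p_N$ directly from the equation at $k=N$ and compare with $\hat w_{N+1}<r_N-p_N$, exactly as you do for $N=m$. Second, ``$x$ has no generalized zero in $(N-1,\infty)$'' requires $x_{N-1}x_N\ge 0$, not $x_{N-1}x_N>0$, so $x_{N-1}=0$ is admissible; then $W_{N-1}$ is undefined and your transplant cannot be anchored at $N-1$. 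This subcase is again handled by a one-step computation ($W_N=R_{N-1}-P_{N-1}\le r_{N-1}-p_{N-1}<w^v_N$, then start the comparison at $N$, noting that $\hat w_N<w^v_N$ already rules out $\hat y$ being proportional to $v$). Since both omissions are cured by the very device you deploy for $N=m$, the proof is complete in idea but, as written, does not cover all configurations permitted by the definition of a generalized zero.
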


The following lemma is an improved version of \cite[Theorem 1]{dos-reh}.
\begin{lemma}  \label{L:0.5}
Let $u, v$ be  recessive solutions of \eqref{Lmin} and \eqref{Lmaj}, respectively,  satisfying $u_k>0, v_k>0$ for $k\in\Z_m$.
Then
\begin{equation}\label{hallo_Zuzana}
\frac{r_k\Delta u_k}{u_k}\le \frac{R_k\Delta v_k}{v_k} \quad \text{ for } k\in\Z_m.
\end{equation}
\end{lemma}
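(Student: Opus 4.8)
The plan is to argue by contradiction, exploiting the minimality property (ii) of recessive solutions together with the Sturm comparison machinery in Propositions \ref{P:0} and \ref{Prop1}. Suppose, contrary to \eqref{hallo_Zuzana}, that there exists $N\in\Z_m$ with
\[
\frac{r_N\Delta u_N}{u_N}>\frac{R_N\Delta v_N}{v_N}.
\]
Since $P_k\ge p_k$ and $R_k\le r_k$, equation \eqref{Lmaj} is a Sturm majorant of \eqref{Lmin}; I would like to construct from $u$ (or directly from $v$) an auxiliary solution of \eqref{Lmin} which, starting at index $N$, "lies above" $v$ in the sense of the quasiderivative ratio $R_k\Delta\cdot/\cdot$, and then show that this auxiliary solution is forced to be dominant for \eqref{Lmin}, contradicting that $u$ is recessive. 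Concretely, consider the solution $y$ of \eqref{Lmin} determined by the initial data $y_N=u_N>0$ and $r_N\Delta y_N/y_N$ chosen strictly between $R_N\Delta v_N/v_N$ and $r_N\Delta u_N/u_N$. Applying the first part of Proposition \ref{P:0} on $\Z_N$ (with the roles played by $x=v$, $r,R$ as given), we get $y_k>0$ and $r_k\Delta y_k/y_k\ge R_k\Delta v_k/v_k$ for all $k\ge N$.

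The next step is to compare $y$ with $u$ inside \eqref{Lmin}. Both are positive solutions of the same linear equation \eqref{Lmin} on $\Z_N$, and at $k=N$ we have $\Delta y_N/y_N<\Delta u_N/u_N$ (because $r_N>0$ and $y_N=u_N$). By the second part of Proposition \ref{P:0}, applied with the pair $(y,\bar y)=(y,u)$, this strict ordering propagates: $\Delta u_k/u_k>\Delta y_k/y_k$ for all $k\ge N$, i.e. $u$ is the solution with the larger logarithmic derivative. But property (ii) says that a recessive solution has the \emph{smallest} logarithmic derivative eventually, i.e. $\Delta u_k/u_k<\Delta\tilde y_k/\tilde y_k$ eventually for every linearly independent $\tilde y$. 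Since $y$ is a nontrivial solution of \eqref{Lmin} and $\Delta y_N/y_N\ne\Delta u_N/u_N$, it is linearly independent of $u$; hence we must have $\Delta u_k/u_k<\Delta y_k/y_k$ eventually, contradicting $\Delta u_k/u_k>\Delta y_k/y_k$ for all $k\ge N$.

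The delicate point—and what I expect to be the main obstacle—is making sure every invocation of Proposition \ref{P:0} is legitimate: the first application requires the \emph{non-strict} inequality $r_N\Delta y_N/y_N\ge R_N\Delta v_N/v_N$ at the base index and the positivity of the majorant solution $v$ on all of $\Z_N$ (which we have, since $v_k>0$ on $\Z_m\supseteq\Z_N$), while the second needs the \emph{strict} inequality at the base index and positivity of $y$ on $\Z_N$ (just obtained). One must also handle the borderline possibility that $u$ and $y$ are in fact proportional—excluded precisely because their logarithmic derivatives differ at $N$—and one should note that Proposition \ref{Prop1} is not actually needed for this route, though it offers an alternative: if instead one perturbs $v$ downward to get a solution of \eqref{Lmaj} with a generalized zero, one contradicts nonoscillation of \eqref{Lmaj}. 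I would present the logarithmic-derivative argument above as the clean proof, with the choice of the intermediate initial slope for $y$ being the only computational nicety.
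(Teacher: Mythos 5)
Your proof is correct and follows essentially the same route as the paper's: argue by contradiction, construct an auxiliary solution $y$ of \eqref{Lmin} whose initial quasiderivative ratio at $N$ sits at (the paper) or strictly between (your version) $R_N\Delta v_N/v_N$ and $r_N\Delta u_N/u_N$, propagate the ordering via both parts of Proposition~\ref{P:0}, and contradict the minimality property \eqref{minimal} of the recessive solution $u$. The only difference is the cosmetic choice of the intermediate initial slope; the paper simply takes $r_N\Delta y_N/y_N=R_N\Delta v_N/v_N$, which works equally well.
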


\begin{proof}
By contradiction, assume that there exists $N\in\Z_m$ such that $r_N\Delta u_N/u_N> R_N\Delta v_N/v_N$. Let $y$ be a solution of
\eqref{Lmin} satisfying $y_N >0$ and $r_N\Delta y_N/y_N=R_N\Delta v_N/v_N$. Then $r_N\Delta y_N/y_N<r_N\Delta u_N/u_N$, (which implies that $y$ is linearly independent with $u$) and from Proposition~\ref{P:0} we get
$y_k>0$, $\Delta y_k/y_k<\Delta u_k/u_k$ for $k\in\Z_N$, which contradicts \eqref{minimal}.
\end{proof}

\begin{lemma}
	\label{L:1}
Let $x$ be a positive solution of \eqref{Lmaj} on $\Z_m$. Then there exists a recessive solution
$u$ of \eqref{Lmin}, which is positive on $\Z_m$.	
\end{lemma}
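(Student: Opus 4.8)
\textbf{Proof plan for Lemma~\ref{L:1}.}

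The plan is to produce the required recessive solution of \eqref{Lmin} by a limiting/monotonicity argument driven by the given positive solution $x$ of the majorant equation \eqref{Lmaj}. First I would record the one-sided comparison furnished by Proposition~\ref{P:0}: pick the solution $y$ of \eqref{Lmin} determined by the initial data $y_m = x_m > 0$ and $r_m \Delta y_m / y_m = R_m \Delta x_m / x_m$; then Proposition~\ref{P:0} immediately gives $y_k > 0$ on all of $\Z_m$ and $r_k \Delta y_k / y_k \ge R_k \Delta x_k / x_k$ for every $k$. Thus \eqref{Lmin} is nonoscillatory (it has a nonoscillatory, indeed everywhere positive, solution) and hence possesses a recessive solution $u$, unique up to a constant multiple, which we normalize by $u_m = 1$; the real work is to show this $u$ stays positive on the \emph{whole} of $\Z_m$, not merely eventually.

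Next I would exploit the ``minimality'' of the recessive slope. Among all solutions of \eqref{Lmin} that are positive on $\Z_m$, consider the set of admissible initial slopes $S = \{\Delta z_m / z_m : z \text{ solves } \eqref{Lmin},\ z_m = 1,\ z_k > 0 \text{ for all } k \in \Z_m\}$. By Proposition~\ref{P:0} (second part), if $z$ is positive on $\Z_m$ and $\bar z_m = 1$ with $\Delta \bar z_m / \bar z_m > \Delta z_m / z_m$, then $\bar z$ is also positive on $\Z_m$ and has everywhere larger logarithmic difference; so $S$ is an interval unbounded above, of the form $(s_0, \infty)$ or $[s_0, \infty)$, and $y$ (rescaled to $y_m = 1$) witnesses that $S$ is nonempty. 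The recessive solution is the one with the smallest logarithmic difference; concretely, I would take a decreasing sequence of slopes $\sigma_n \downarrow s_0$ with $\sigma_n \in S$, let $z^{(n)}$ be the corresponding positive solutions, and pass to the limit. Because solutions of the linear equation \eqref{Lmin} depend continuously on the initial slope (the recurrence $r_{k+1}\Delta z_{k+1} = r_k \Delta z_k - p_k z_{k+1}$ propagates the data forward), $z^{(n)}_k \to \tilde u_k$ pointwise for a solution $\tilde u$ with $\tilde u_m = 1$ and $\Delta \tilde u_m/\tilde u_m = s_0$, and $\tilde u_k \ge 0$ on $\Z_m$. To upgrade nonnegativity to strict positivity, observe that if $\tilde u_N = 0$ for some $N$ then $\tilde u$ would have a generalized zero, while any solution with initial slope slightly larger than $s_0$ is positive everywhere; one then shows $\tilde u$ must be the recessive solution (it has the least logarithmic difference, which by characterization~(ii) and Lemma~\ref{L:0.5}-type reasoning forces recessiveness), and a recessive solution with a generalized zero at $N$ and none afterward would, by Proposition~\ref{Prop1}, force \emph{every} solution of the majorant \eqref{Lmaj} to have a generalized zero in $(N-1,\infty)$ — contradicting the positivity of $x$. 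Hence $\tilde u_k > 0$ for all $k \in \Z_m$, and $u := \tilde u$ is the desired solution.

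The main obstacle I anticipate is the identification step: showing that the ``extremal'' positive solution $\tilde u$ obtained as the boundary of the admissible-slope interval is genuinely the recessive solution of \eqref{Lmin}. The clean way is to argue that the recessive solution $u$ of \eqref{Lmin}, which exists by nonoscillation, satisfies $\Delta u_k/u_k < \Delta z_k/z_k$ eventually for every linearly independent $z$ by \eqref{minimal}; combined with the first part of Proposition~\ref{P:0} (comparing two solutions of the \emph{same} equation \eqref{Lmin}, i.e. taking $R = r$, $P = p$), an eventual strict inequality between logarithmic differences propagates backward to all of $\Z_m$, so $u$ has the pointwise-smallest logarithmic difference among solutions positive on $\Z_m$ — provided $u$ itself is positive on $\Z_m$. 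This circularity is resolved precisely by Proposition~\ref{Prop1}: were the recessive $u$ to have its first generalized zero at some $N \ge m$, then every solution of \eqref{Lmaj}, in particular $x$, would have a generalized zero in $(N-1,\infty)$, contradicting $x_k > 0$ on $\Z_m$. Therefore the recessive solution of \eqref{Lmin} has no generalized zero on $\Z_m$, i.e. (after fixing sign) it is positive there, which is exactly the assertion. In fact this last paragraph alone gives a shorter route: invoke nonoscillation of \eqref{Lmin} to get a recessive $u$, normalize $u_m > 0$, and use Proposition~\ref{Prop1} contrapositively against the positivity of $x$ to rule out any generalized zero of $u$ on $\Z_m$.
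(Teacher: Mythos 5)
Your final ``shorter route'' is exactly the paper's argument: the recessive solution $u$ of \eqref{Lmin} exists because the nonoscillatory majorant \eqref{Lmaj} forces \eqref{Lmin} to be nonoscillatory, and Proposition~\ref{Prop1} is then played off against the positivity of $x$ to exclude generalized zeros of $u$ on $\Z_m$. One refinement is needed to make that route airtight, and it is the one place where your wording slips: Proposition~\ref{Prop1} applies only to a generalized zero of $u$ in $N$ with \emph{no further generalized zero in $(N,\infty)$}, so invoking it at the \emph{first} generalized zero, as your last paragraph does, is not directly licensed. You must either pass to the \emph{last} generalized zero (which exists since $u$ is nonoscillatory) or, as the paper does, split into two cases: if $u$ had a second generalized zero beyond the first, the finite-interval Sturm comparison theorem already forces a generalized zero of every solution of \eqref{Lmaj} between them, contradicting $x_k>0$; otherwise Proposition~\ref{Prop1} applies and gives the same contradiction. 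Your middle paragraph does state the ``none afterward'' hypothesis correctly, so this is a matter of assembling the cases rather than a missing idea. The elaborate construction in your first two paragraphs --- the interval of admissible initial slopes, the limit of solutions $z^{(n)}$ as $\sigma_n\downarrow s_0$, and the identification of the limit with the recessive solution --- is not needed and is the weakest part of the write-up (continuity of the solution map and the identification step are only sketched); the paper bypasses all of it by taking the recessive solution as given from nonoscillation and using only Proposition~\ref{Prop1} plus the finite-interval Sturm theorem.
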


\begin{proof}
Let $u$ be a recessive solution of \eqref{Lmin}, whose existence is guaranteed by nonoscillation of majorant equation \eqref{Lmaj}. By contradiction, assume that there exists $N\in\Z_m$ such that
\[
u_N \neq 0, \quad u_N u_{N+1}\le 0.
\]
Then $u$ cannot have a generalized zero in $(N+1,\infty)$. Indeed, if $u$ has a generalized zero in $M\in\Z_{N+2}$, then
by the Sturm comparison theorem on a finite interval (see e.g., \cite[Theorem~1.4.3]{abgo}, \cite[Theorem~1.2]{abe:dis}), every solution of \eqref{Lmaj} has a generalized zero in  $(N,M]$, which is a contradiction with the positivity of $x$.
Applying now Proposition~\ref{Prop1}, we get that any solution of \eqref{Lmaj} has a generalized zero in $(N,\infty)$ which again contradicts  the positivity of $x$ on $\Z_m$.
\end{proof}

The next theorem is, in fact, the main statement of this section and it plays an important role in the proof of Theorem~\ref{Tmain}.

\begin{theorem} \label{Trec}
Let $x$ be a positive solution of \eqref{Lmaj} on $\Z_m$.
Then there is a  recessive solution $u$ of \eqref{Lmin}, which is positive on $\Z_m$ and  satisfies
\begin{equation} \label{ruuRxx}
\frac{r_k\Delta u_k}{u_k}\le\frac{R_k\Delta x_k}{x_k},\ \ k\in\Z_m.
\end{equation}
In addition, if $x$ is decreasing (nonincreasing) on $\Z_m$, then $u$ is decreasing (nonincreasing) on $\Z_m$.	
\end{theorem}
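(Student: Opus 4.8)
The plan is to assemble the theorem from the two lemmas that immediately precede it, namely Lemma~\ref{L:1} and Lemma~\ref{L:0.5}. Lemma~\ref{L:1} already guarantees the existence of a recessive solution $u$ of \eqref{Lmin} that is positive on all of $\Z_m$, given a positive solution $x$ of the majorant \eqref{Lmaj}. So the only genuinely new work is to produce the pointwise inequality \eqref{ruuRxx} comparing $r_k\Delta u_k/u_k$ with $R_k\Delta x_k/x_k$, and then to transfer a monotonicity property of $x$ to $u$.

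For the inequality, the natural route is to insert a recessive solution $v$ of the majorant equation \eqref{Lmaj} as an intermediary. First I would invoke nonoscillation of \eqref{Lmaj} together with the positivity of the solution $x$ to conclude, via the Sturm separation theorem, that a recessive solution $v$ of \eqref{Lmaj} is itself positive on $\Z_m$ (or at least eventually positive, and then one argues back to $m$ exactly as in the proof of Lemma~\ref{L:1}, since a generalized zero of $v$ would force one of $x$ by Proposition~\ref{Prop1} or the finite-interval Sturm theorem). With $u,v$ both positive recessive solutions of \eqref{Lmin} and \eqref{Lmaj} respectively, Lemma~\ref{L:0.5} gives
\[
\frac{r_k\Delta u_k}{u_k}\le\frac{R_k\Delta v_k}{v_k},\qquad k\in\Z_m.
\]
It then remains to compare $v$ with $x$ within equation \eqref{Lmaj}. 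Since $v$ is the recessive solution of \eqref{Lmaj} and $x$ is an arbitrary positive solution, property~(ii) of recessive solutions yields $\Delta v_k/v_k\le\Delta x_k/x_k$ eventually, and Proposition~\ref{P:0} (the second, "moreover" part, applied to \eqref{Lmaj}) propagates this back to all $k\in\Z_m$; multiplying by $R_k>0$ gives $R_k\Delta v_k/v_k\le R_k\Delta x_k/x_k$ on $\Z_m$. Chaining the two inequalities produces \eqref{ruuRxx}. A small point to handle carefully is the case where $v$ and $x$ are linearly dependent, in which case the comparison is an equality and the argument is trivial; otherwise one applies Proposition~\ref{P:0} with the appropriate ordering of initial ratios.

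For the final monotonicity assertion, suppose $x$ is nonincreasing, i.e. $\Delta x_k\le 0$ on $\Z_m$. Then $R_k\Delta x_k/x_k\le 0$, so \eqref{ruuRxx} forces $r_k\Delta u_k/u_k\le 0$; since $r_k>0$ and $u_k>0$, this means $\Delta u_k\le 0$, i.e. $u$ is nonincreasing. If $x$ is strictly decreasing the same computation gives $\Delta u_k<0$. This step is essentially immediate once \eqref{ruuRxx} is in hand.

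The main obstacle I expect is not any deep estimate but rather the bookkeeping needed to upgrade the "eventual" statements coming from the characterization of recessive solutions (the inequality \eqref{minimal} and property~(ii)) to statements valid on all of $\Z_m$, and to ensure the intermediary $v$ is positive on the whole discrete half-line rather than merely eventually. Both issues are dispatched by the monotonicity-propagation in Proposition~\ref{P:0} and the generalized-zero arguments already used in the proof of Lemma~\ref{L:1}, so the proof is really a careful concatenation of Lemma~\ref{L:1}, Lemma~\ref{L:0.5}, Proposition~\ref{P:0}, and property~(ii), with attention to the linearly-dependent edge case.
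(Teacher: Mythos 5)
Your proposal is correct and follows essentially the same route as the paper: positivity of the recessive solutions $u$ of \eqref{Lmin} and $v$ of \eqref{Lmaj} via Lemma~\ref{L:1}, the inequality $r_k\Delta u_k/u_k\le R_k\Delta v_k/v_k$ from Lemma~\ref{L:0.5}, and the comparison $\Delta v_k/v_k\le\Delta x_k/x_k$ on all of $\Z_m$ obtained by combining property~(ii) with the forward-propagation part of Proposition~\ref{P:0} in a contradiction argument. The paper's proof is exactly this concatenation, with the monotonicity claim read off from \eqref{ruuRxx} just as you do.
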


\begin{proof}
Let $x$ be a positive solution of \eqref{Lmaj} on $\Z_m$. From Lemma~\ref{L:1}, there exist a recessive solution $u$ of \eqref{Lmin} and a recessive solution $v$ of \eqref{Lmaj}, which are both positive on $\Z_m$. 	
We claim that
\begin{equation} \label{hallo_Serena}
\frac{\Delta v_k}{v_k}\le\frac{\Delta x_k}{x_k} \quad \text{for } k \in\Z_m.
\end{equation}
Indeed, suppose by contradiction that there is $N\in\Z_m$ such that $\Delta x_N/x_N<\Delta v_N/v_N$. Then, in view of Proposition~\ref{P:0},
$\Delta x_k/x_k<\Delta v_k/v_k$ for $k\in\Z_N$, which contradicts \eqref{minimal}.
 Combining \eqref{hallo_Serena} and \eqref{hallo_Zuzana},
we obtain \eqref{ruuRxx}. The last assertion of the statement is an immediate consequence of \eqref{ruuRxx}.
\end{proof}

Taking $p=P$ and $r=R$ in Theorem~\ref{Trec}, we get the following corollary.

\begin{corollary} \label{corol}
If \eqref{Lmaj} has a positive decreasing (nonincreasing) solution on  $\Z_m$, then there exists a recessive solution of \eqref{Lmaj} which is positive decreasing (nonincreasing)  on  $\Z_m$.
\end{corollary}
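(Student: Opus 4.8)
The plan is to obtain this as a direct specialization of Theorem~\ref{Trec} to the case in which the minorant and majorant equations coincide. Concretely, I would set $p_k=P_k$ and $r_k=R_k$ for $k\in\Z_m$, so that \eqref{Lmin} and \eqref{Lmaj} become literally the same equation. Before invoking Theorem~\ref{Trec} one must check that its standing hypotheses hold: the relations $P_k\ge p_k\ge 0$ and $0<R_k\le r_k$ are satisfied with equality, so \eqref{Lmaj} is (trivially) a Sturm majorant of \eqref{Lmin}; and the blanket assumption of Section~\ref{S3} that \eqref{Lmaj} be nonoscillatory follows from the hypothesis, since the existence of a positive solution on $\Z_m$ means there is a nonoscillatory solution, hence by the Sturm separation theorem the equation is nonoscillatory.

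With these verifications in place, I would let $x$ be the given positive decreasing (respectively nonincreasing) solution of \eqref{Lmaj} on $\Z_m$ and apply Theorem~\ref{Trec}. It yields a recessive solution $u$ of \eqref{Lmin}, i.e.\ of \eqref{Lmaj}, which is positive on $\Z_m$ and satisfies $r_k\Delta u_k/u_k\le R_k\Delta x_k/x_k=r_k\Delta x_k/x_k$ for $k\in\Z_m$; and, by the final assertion of Theorem~\ref{Trec}, $u$ inherits the monotonicity of $x$, that is, $u$ is decreasing (nonincreasing) on $\Z_m$. Since a recessive solution of \eqref{Lmin} is, under the identification $p=P$, $r=R$, exactly a recessive solution of \eqref{Lmaj}, this $u$ is the desired solution and the proof is complete.

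There is essentially no obstacle here: the corollary is a bookkeeping consequence of the theorem. The only point that requires a moment's care is the justification that the majorant equation is nonoscillatory — needed because Section~\ref{S3} assumes it throughout — but this is immediate from the existence of a positive (hence nonoscillatory) solution together with the Sturm separation theorem cited in the excerpt.
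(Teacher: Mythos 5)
Your proof is correct and is exactly the paper's argument: the authors obtain the corollary by the one-line observation ``Taking $p=P$ and $r=R$ in Theorem~\ref{Trec}.'' Your additional check that the standing nonoscillation hypothesis follows from the existence of a positive solution via the Sturm separation theorem is a sound (and slightly more careful) filling-in of that same specialization.
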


We close this section by the following characterization of the asymptotic behavior of recessive solutions which will be used later.

\begin{lemma}
%[{\cite[Lemma 2.1 and Corollary 3.6]{cdm-ade}}]
\label{L:4}
Let
$$
\sum_{j=m}^\infty \frac{1}{r_j}<\infty\ \ \text{and}\ \
\sum_{j=m}^{\infty}p_j\sum_{i=j+1}^{\infty} \frac{1}{r_i}<\infty.
$$	
Then \eqref{Lmin} is nonoscillatory. Moreover, for every $d\neq 0$, \eqref{Lmin} has an eventually positive,
nonincreasing recessive solution $u$, tending to zero and satisfying
$$
\lim_{k\to\infty} \frac{u_k}{\sum_{j=k}^{\infty}r_j^{-1}}=d.
$$
\end{lemma}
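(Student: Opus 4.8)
The plan is to derive the asymptotic formula by first establishing nonoscillation, then constructing the desired solution via a summation/fixed-point argument, and finally identifying which of the two linearly independent solutions it is. Nonoscillation follows at once from the hypotheses: the comparison series conditions $\sum 1/r_j<\infty$ and $\sum_j p_j\sum_{i\ge j+1}1/r_i<\infty$ are exactly the form that guarantees a positive solution exists (one can check that the constant sequence, or $y_k=1$, is a supersolution up to a small tail, or invoke a standard criterion such as \cite[Lemma~1.7.2]{abgo} applied to a suitable majorant with $p\equiv0$), and by the Sturm separation theorem the existence of one nonoscillatory solution makes \eqref{Lmin} nonoscillatory. The heart of the argument is to produce, for each $d\neq0$, a solution $u$ with $u_k\sim d\sum_{j\ge k}r_j^{-1}$. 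Set $s_k=\sum_{j=k}^\infty r_j^{-1}$, which is well-defined, positive, decreasing, and tends to $0$. The idea is to look for $u_k=s_k(d+z_k)$ where $z_k\to0$, or equivalently to set up the integral (summation) equation obtained by summing \eqref{Lmin} twice: writing $r_k\Delta u_k=C-\sum_{j\ge k}p_j u_{j+1}$ for an appropriate constant $C$ (the "initial data at infinity"), and then $u_k=\sum_{j\ge k}r_j^{-1}\bigl(\sum_{i\ge j}p_i u_{i+1}-C\bigr)$, adjusting signs so that the leading term is $-C\,s_k$ with $-C=d$.

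The key steps, in order, are: (1) fix a large index $N$ so that $\sum_{j\ge N}p_j\,s_{j+1}$ is smaller than, say, $1/2$; (2) on the Banach space of sequences $w=(w_k)_{k\ge N}$ with norm $\|w\|=\sup_{k\ge N}|w_k|/s_k$, define the operator
\[
(\mathcal{T}w)_k = d\,s_k + \sum_{j=k}^{\infty}\frac1{r_j}\sum_{i=j}^{\infty}p_i\,w_{i+1},
\]
and check that $\mathcal{T}$ maps the ball $\{\|w-d\,s_\cdot\|\le |d|\}$ into itself and is a contraction there, using the weighted estimate
\[
\Bigl|\sum_{j=k}^{\infty}\frac1{r_j}\sum_{i=j}^{\infty}p_i\,w_{i+1}\Bigr|
\le \|w\|\sum_{j=k}^{\infty}\frac1{r_j}\sum_{i=j}^{\infty}p_i\,s_{i+1}
\le \|w\|\,s_k\sum_{i=N}^{\infty}p_i\,s_{i+1},
\]
where the last inequality swaps the order of summation and bounds $\sum_{j=k}^{i}r_j^{-1}\le s_k$; (3) take $u$ to be the fixed point, which by construction solves \eqref{Lmin} on $\Z_N$, satisfies $u_k=d\,s_k+o(s_k)$, hence $\lim_k u_k/s_k=d$ and (taking $d>0$, or replacing $u$ by $-u$) $u_k>0$ and $u_k\to0$ eventually; (4) verify $u$ is eventually nonincreasing — from the summed equation $r_k\Delta u_k=-d+\sum_{i\ge k}p_i u_{i+1}\to -d<0$ when $d>0$, so $\Delta u_k<0$ for large $k$; (5) extend $u$ backwards to all of $\Z_m$ as a solution of the linear recurrence (this is automatic and does not affect the asymptotics, though positivity/monotonicity are only claimed eventually); (6) identify $u$ as recessive: any dominant solution $y$ grows at least like a positive constant (since one linearly independent solution of \eqref{Lmin} can be taken with $\Delta y_k/y_k\ge0$ eventually and bounded below away from $0$, so $y_k\not\to0$ faster than $s_k$), whence $u_k/y_k\to0$; alternatively, use characterization (i), checking $\sum 1/(r_k u_k u_{k+1})=\infty$ because $r_k u_k u_{k+1}\sim d^2 r_k s_k^2$ and $\sum 1/(r_k s_k^2)=\sum (s_k-s_{k+1})/(r_k^{-1}s_k s_{k+1})\cdot(\text{telescoping})$ diverges since $\sum r_k^{-1}/s_k s_{k+1}=\sum(1/s_{k+1}-1/s_k)=\infty$.

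I expect the main obstacle to be step (6), cleanly pinning down that the constructed $u$ is the recessive solution rather than merely a small solution — the slickest route is the integral characterization (i), and the divergence of $\sum 1/(r_k s_k s_{k+1})$ via the telescoping identity $r_k^{-1}=s_k-s_{k+1}$ is the one computation worth doing carefully. A secondary point requiring a little care is the interchange of the order of summation in the double sum and the justification that all series involved converge absolutely (guaranteed by (H)-type hypotheses here), so that the operator $\mathcal{T}$ is well-defined on the whole weighted-norm ball; once that is in place, the contraction estimate is routine. Finally, the uniqueness-up-to-constant of the recessive solution (already recalled in the text) together with the linearity of the asymptotic formula in $d$ shows that every scalar multiple is captured, matching the statement "for every $d\neq0$".
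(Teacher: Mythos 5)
Your argument is correct in substance but takes a genuinely different route from the paper's. The paper disposes of Lemma~\ref{L:4} in two lines: it quotes \cite[Lemma 2.1 and Corollary 3.6]{cdm-ade} for the existence of an eventually positive, nonincreasing recessive solution with $\lim_{k\to\infty} r_k\Delta u_k=-d$, and then deduces $\lim_k u_k/\sum_{j\ge k}r_j^{-1}=d$ by the discrete L'Hospital rule. You instead reconstruct the content of the cited results from scratch: a contraction-mapping construction in the weighted space with norm $\sup_k |w_k|/s_k$, where $s_k=\sum_{j\ge k}r_j^{-1}$, followed by identification of the fixed point as recessive via the summation characterization (i), using the telescoping identity $r_k^{-1}=s_k-s_{k+1}$ to get $\sum_k r_k^{-1}/(s_k s_{k+1})=\sum_k\bigl(1/s_{k+1}-1/s_k\bigr)=\infty$. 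That computation, and the interchange-of-summation bound $\sum_{j\ge k}r_j^{-1}\sum_{i\ge j}p_i s_{i+1}\le s_k\sum_{i\ge k}p_i s_{i+1}$, are exactly right; your approach buys self-containedness at the cost of length, and it makes visible why the second hypothesis is the natural smallness condition. Two small points: the operator as displayed carries the wrong sign on the double sum (differencing your $\mathcal{T}w$ twice gives $\Delta(r_k\Delta u_k)=+p_k u_{k+1}$; the fixed-point equation for \eqref{Lmin} is $u_k=d\,s_k-\sum_{j\ge k}r_j^{-1}\sum_{i\ge j}p_i u_{i+1}$ --- you did flag ``adjusting signs'' but then wrote the plus), and your preliminary nonoscillation discussion is dispensable, since nonoscillation follows at once, via Sturm separation, from the eventually positive solution you construct.
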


\begin{proof} It follows from \cite[Lemma 2.1 and Corollary 3.6]{cdm-ade}. More precisely,
the result \cite[Lemma~2.1]{cdm-ade} guarantees $\lim_{k\to\infty}r_k\Delta u_k=-d<0$. Now, from the discrete L'Hospital rule, we get
$$
\lim_{k\to\infty}\frac{u_k}{\sum_{j=k}^{\infty}r_j^{-1}}=
\lim_{k\to\infty}\frac{\Delta u_k}{-r_k^{-1}}=d.
$$
\end{proof}

\section{Main result: solvability of BVP}\label{S4}

Our main result is the following.

\begin{theorem}\label{Tmain}
Let (H$_{i}$), i=1,2,3, be satisfied and
\begin{equation}\label{Lc}
L_c=\sup_{u\in(0,c]}\frac{F(u)}{u}.
\end{equation}
 If the linear difference equation
\begin{equation}
	\label{L2}
\Delta\left(\frac{a_k}{\sqrt{1+c^2}}\,\Delta z_k \right)
+L_c b_k z_{k+1}=0,	
\end{equation}
 has a positive decreasing solution on $\Z_m$, then BVP \eqref{E}-\eqref{BVP} has at least one solution.
\end{theorem}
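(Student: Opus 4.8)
The plan is to solve the BVP by a fixed point argument in the Fréchet space of sequences, using a Schauder-type linearization device, where the a-priori bounds come from the recessive solution of the linear comparison equation \eqref{L2}. The first step is to set up the right function space: let $X$ be the space of real sequences on $\Z_m$ with the topology of pointwise convergence, which is a Fréchet space, and consider a suitable closed convex subset $\Omega\subset X$ consisting of sequences $x$ with $x_m=c$, $0\le x_k$, $\Delta x_k\le 0$, and an upper bound on $x_k$ to be specified via the recessive solution. For a fixed $x\in\Omega$, one freezes the nonlinearity and the curvature operator: because $0<x_{k+1}\le c$ on $\Omega$, one has $F(x_{k+1})/x_{k+1}\le L_c$ and $\sqrt{1+(\Delta x_k)^2}\ge\sqrt{1}$, while the presence of $x$ in the denominator of the curvature term is controlled (since $\Delta x_k\le 0$ and we want a majorant). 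The linearized problem associated to $x$ is
\[
\Delta\!\left(\frac{a_k}{\sqrt{1+(\Delta x_k)^2}}\,\Delta y_k\right)+b_k\frac{F(x_{k+1})}{x_{k+1}}\,y_{k+1}=0,\qquad y_m=c,
\]
and one selects as its solution the (positive, decaying, nonincreasing) recessive solution normalized by $y_m=c$. The key observation is that this linear equation has \eqref{L2} as a Sturm \emph{majorant}: indeed $a_k/\sqrt{1+(\Delta x_k)^2}\ge a_k/\sqrt{1+c^2}$ is the wrong direction, so in fact \eqref{L2} must be arranged to be the majorant — one checks $0<a_k/\sqrt{1+c^2}\le a_k/\sqrt{1+(\Delta x_k)^2}$ fails, so instead the correct comparison is that the linearized equation is a majorant of \eqref{L2}; more carefully, since $|\Delta x_k|$ is not a priori bounded, one restricts $\Omega$ so that $\Delta x_k$ is small, or uses that any positive decreasing solution forces $0\ge\Delta x_k\ge x_{k+1}-x_k\ge -c$, giving $1\le 1+(\Delta x_k)^2\le 1+c^2$ and hence $a_k/\sqrt{1+c^2}\le a_k/\sqrt{1+(\Delta x_k)^2}\le a_k$. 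Then \eqref{L2} is a Sturm majorant of the linearized equation, so by the hypothesis that \eqref{L2} has a positive decreasing solution and by Theorem~\ref{Trec} (with $R_k=a_k/\sqrt{1+c^2}$, $P_k=L_cb_k$ and $r_k=a_k/\sqrt{1+(\Delta x_k)^2}$, $p_k=b_kF(x_{k+1})/x_{k+1}$), the linearized equation has a positive nonincreasing recessive solution $u$ with $r_k\Delta u_k/u_k\le R_k\Delta x^{(0)}_k/x^{(0)}_k$ for the fixed decreasing solution $x^{(0)}$ of \eqref{L2}.

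The second step is to define the operator $T:\Omega\to X$ by $T(x)=y$, where $y$ is the recessive solution of the linearized equation normalized by $y_m=c$; one must verify $T(\Omega)\subseteq\Omega$. Positivity of $y$ and $\Delta y_k\le 0$ come from Theorem~\ref{Trec} and Corollary~\ref{corol}; $\lim_{k\to\infty}y_k$ exists by monotonicity and boundedness, and equals $0$ because the recessive solution of a linear equation satisfying the summability conditions of Lemma~\ref{L:4} (which hold here, by (H$_1$), (H$_2$) and $r_k\ge a_k/\sqrt{1+c^2}$) tends to zero. The required upper bound $y_k\le x^{(0)}_k\cdot(c/x^{(0)}_m)$ follows from integrating the inequality $\Delta y_k/y_k\le (R_k/r_k)\Delta x^{(0)}_k/x^{(0)}_k\le \Delta x^{(0)}_k/x^{(0)}_k$ (using $R_k\le r_k$ and $\Delta x^{(0)}_k\le0$) and Lemma~\ref{L:A} / the product estimates — this is exactly why Section~\ref{S2} was developed. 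This gives compactness of $\Omega$ (a pointwise-bounded, equi-monotone family of sequences) in the Fréchet topology.

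The third step is to check continuity of $T$ and then invoke the Schauder–Tychonoff fixed point theorem in the Fréchet space, concluding the existence of $x\in\Omega$ with $T(x)=x$; for such $x$, the linearized equation becomes \eqref{E}, and the boundary conditions \eqref{BVP} are built into $\Omega$, so $x$ solves the BVP — after ruling out the trivial case by noting $x_m=c>0$ together with monotonicity forces $x_k>0$ for all $k$ (here hypothesis (H$_3$), $F(u)u>0$, and the structure of recessive solutions prevent $x$ from hitting zero in finite steps). I expect the main obstacle to be the continuity of $T$ together with the self-mapping property: the map $x\mapsto u$ sends a sequence to the recessive solution of an $x$-dependent linear equation, and recessive solutions are defined by an asymptotic (limiting) condition, not by initial data, so one cannot simply appeal to continuous dependence on parameters for ODEs. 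The resolution should be to characterize $u$ implicitly — e.g., as the solution with $u_m=c$ and $\lim_k r_k\Delta u_k = -d$ for the appropriate $d$ from Lemma~\ref{L:4}, or via the a-priori bound that pins down $u$ uniquely within $\Omega$ — so that continuity follows from the a-priori bounds rather than an explicit formula for $T$; this is precisely the advantage of the linearization method from \cite{CFM,MMR2,Fixed} that the introduction advertises.
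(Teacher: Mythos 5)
Your overall strategy coincides with the paper's: freeze $u$ in the curvature term and in $F(u_{k+1})/u_{k+1}$, observe that \eqref{L2} is a Sturm majorant of the frozen linear equation (since $0\le u_k\le c$ forces $(\Delta u_k)^2\le c^2$ and $F(u_{k+1})/u_{k+1}\le L_c$), take as $T(u)$ the recessive solution normalized by $y_m=c$, and close with Schauder--Tychonoff via the linearization device of \cite{MMR2}. However, there is a genuine error in the step establishing $T(\Omega)\subseteq\Omega$. Theorem~\ref{Trec} gives $r_k\Delta y_k/y_k\le R_k\Delta z_k/z_k$, i.e.\ $\Delta y_k/y_k\le (R_k/r_k)\Delta z_k/z_k$ with $R_k/r_k=\sqrt{1+(\Delta u_k)^2}/\sqrt{1+c^2}\le 1$. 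Since $\Delta z_k/z_k\le 0$, multiplying it by a factor in $(0,1]$ \emph{increases} it, so your chain $(R_k/r_k)\Delta z_k/z_k\le\Delta z_k/z_k$ points the wrong way, and the claimed bound $y_k\le z_k\,(c/z_m)$ does not follow. What does follow is only $\Delta y_k/y_k\le M\,\Delta z_k/z_k$ with $M=1/\sqrt{1+c^2}<1$, and, as Section~\ref{S2} is at pains to show (Example~1), in the discrete setting such an inequality cannot be upgraded to $y_k\le Kz_k^M$, let alone $y_k\le Kz_k$. The correct invariant set is
$$
\Omega=\Bigl\{u: 0\le u_k\le c\prod_{j=m}^{k-1}\bigl(1+M\tfrac{\Delta z_j}{z_j}\bigr)\Bigr\},
$$
obtained via Lemma~\ref{L:A}, and the decay of the fixed point then genuinely needs Lemma~\ref{L:B} to show that this product tends to zero. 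If your pointwise comparison with $z_k$ were valid, the whole apparatus of Section~\ref{S2} would be superfluous; that it is not is precisely the discrete/continuous discrepancy the paper is built around.

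A second, smaller gap: you rightly flag continuity of $T$ as the delicate point, but the actual resolution requires verifying $\overline{T(\Omega)}\subseteq S$, where $S$ encodes not only $y_m=c$, $y_k>0$, $\Delta y_k\le 0$ but also the recessive characterization $\sum_j (a_jy_jy_{j+1})^{-1}=\infty$. This last condition is what makes the solution of the frozen problem \emph{unique} (so that $T$ is well defined and the limit of a convergent sequence $T(u^j)$ can be identified as $T(u^\infty)$), and one must show it survives passage to the limit; the paper does this with the estimate $\overline{y}_k\le N\sum_{j\ge k}a_j^{-1}$ from Lemma~\ref{L:4} and a telescoping identity, together with a separate argument (uniqueness for the linear initial value problem) ruling out that the limit vanishes at some finite index. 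Your proposal gestures at pinning $u$ down by a-priori bounds but does not supply either of these verifications.
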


Effective criteria, ensuring the existence of a positive decreasing solution of \eqref{L2}, are given at the end of this section.
\vskip2mm

From this theorem and its proof we get the following.

\begin{corollary}\label{c:1}
Let (H$_{i}$), i=1,2,3, be satisfied. If \eqref{L2} has a  positive decreasing solution on $\Z_m$ for $c=c_0>0$, then \eqref{E}-\eqref{BVP} has at least one solution for every  $c\in (0, c_0]$.
\end{corollary}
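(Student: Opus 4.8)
\textbf{Proof plan for Corollary \ref{c:1}.} The strategy is to deduce the claim from Theorem \ref{Tmain} together with a monotonicity observation about the linearized equation \eqref{L2} as the parameter $c$ varies. The key point is that when $c$ decreases, the coefficient $a_k/\sqrt{1+c^2}$ increases, while the coefficient $L_c b_k$ does not increase. Concretely, for $0<c\le c_0$ we have $\sqrt{1+c^2}\le\sqrt{1+c_0^2}$, hence $a_k/\sqrt{1+c^2}\ge a_k/\sqrt{1+c_0^2}>0$; moreover $L_c=\sup_{u\in(0,c]}F(u)/u\le\sup_{u\in(0,c_0]}F(u)/u=L_{c_0}$ since the supremum is taken over a smaller set, so $L_cb_k\le L_{c_0}b_k$. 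Therefore the equation \eqref{L2} written with parameter $c_0$ is a Sturm majorant (in the sense defined in Section \ref{S3}) of the equation \eqref{L2} written with parameter $c$.

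First I would invoke the hypothesis: \eqref{L2} with $c=c_0$ has a positive decreasing solution on $\Z_m$. Next, I would apply the comparison machinery of Section \ref{S3} — specifically Theorem \ref{Trec}, with the role of the majorant equation \eqref{Lmaj} played by \eqref{L2} at $c=c_0$ and the role of \eqref{Lmin} played by \eqref{L2} at the given $c\in(0,c_0]$. Since the majorant is nonoscillatory (it possesses a positive, hence nonoscillatory, solution), Theorem \ref{Trec} yields a recessive solution of the minorant \eqref{L2} (parameter $c$) that is positive on $\Z_m$, and — because the prescribed positive solution of the majorant is decreasing — this recessive solution is decreasing on $\Z_m$ as well. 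Thus \eqref{L2} with parameter $c$ admits a positive decreasing solution on $\Z_m$.

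Finally, since (H$_1$)–(H$_3$) are assumed for the fixed data $a$, $b$, $F$ and do not depend on $c$, and since we have just produced a positive decreasing solution of \eqref{L2} for the chosen $c$, Theorem \ref{Tmain} applies verbatim and gives at least one solution of the BVP \eqref{E}--\eqref{BVP} with that value of $c$. As $c\in(0,c_0]$ was arbitrary, the corollary follows.

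The only genuine subtlety — the step I would double-check carefully — is the verification that \eqref{L2} at $c_0$ is indeed a Sturm majorant of \eqref{L2} at $c$ in the precise sense required by Theorem \ref{Trec}, i.e. that both coefficient inequalities $0<R_k\le r_k$ and $P_k\ge p_k\ge0$ hold pointwise on $\Z_m$; here $r_k=a_k/\sqrt{1+c^2}$, $R_k=a_k/\sqrt{1+c_0^2}$, $p_k=L_cb_k$, $P_k=L_{c_0}b_k$. Both are immediate from $c\le c_0$ as noted above, together with $b_k\ge0$ from (H$_2$). Everything else is a direct citation of results already established in the paper.
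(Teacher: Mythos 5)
Your proposal is correct and follows essentially the same route as the paper's own proof: recognize that \eqref{L2} with $c=c_0$ is a Sturm majorant of \eqref{L2} with the smaller $c$, invoke Theorem~\ref{Trec} to obtain a positive decreasing (recessive) solution for the minorant, and then apply Theorem~\ref{Tmain}. The only difference is that you spell out the coefficient inequalities $a_k/\sqrt{1+c^2}\ge a_k/\sqrt{1+c_0^2}$ and $L_c\le L_{c_0}$ explicitly, which the paper leaves implicit.
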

\vskip4mm

To prove Theorem \ref{Tmain}, we use a fixed point approach,
based on the Schauder-Tychonoff theorem on the Fr\'echet space
\[
\mathbb X=\{u: \Z_m \to \R\}
\]
of all sequences defined on $\Z_m$, endowed with the topology
of pointwise convergence on $\mathbb{Z}_{m}$.
The use of the Fr\'{e}chet space $\mathbb{X}$, instead of a
suitable Banach space, is advantageous especially for the compactness test. Even if this is true also in the
continuous case, in the discrete case the situation is even more simple, since any bounded set in $\mathbb{X}$ is
relatively compact from the discrete
Arzel\`{a}-Ascoli theorem. We recall that a set $\Omega \subset \mathbb{X}$ is bounded if the sequences in $\Omega$
are equibounded on every compact subset of $\Z_{m}$. The compactness test is therefore very
simple just owing to the topology of $\mathbb{X}$, while in discrete Banach spaces can
require some checks which are not always immediate.

Notice that, if
$\Omega\subset$ $\mathbb{X}$ is bounded, then $\Omega^{\Delta}=\{\Delta
u,\,u\in\Omega\}$ is bounded, too. This is a significant discrepancy between the continuous and the discrete case; such a property can simplify the solvability of discrete boundary value problems associated to equations of order two or higher with respect to the continuous counterpart because \textit{a-priori} bounds for the first difference
\[
\Delta x_n=x_{n+1}-x_n
\]
are a direct consequences of \textit{a-priori} bounds for $x_n$, and similarly for higher order differences.

\vskip2mm
In \cite[Theorem 2.1]{MMR2}, the authors proved an existence result for BVPs associated to functional difference equations in Fr\'echet spaces (see also \cite[Corollary 2.6]{MMR2}, \cite[Theorem 4]{Fixed} and remarks therein). That result is a  discrete counterpart of an existence result stated in \cite[Theorem 1.3]{CFM} for the continuous case, and reduces the problem to that of finding good \textit{a-priori} bounds for the unknown of a auxiliary linearized equation.

The function
\begin{equation*}
\Phi(v)=\frac{v}{\sqrt{1+v^{2}}}
\end{equation*}
can be decomposed as
\begin{equation*}\label{JJ}
\Phi(v)=vJ(v),
\end{equation*}
 where $J$ is a continuous function on $\R$ such that
\,${\lim_{v\to 0}J(v)=1}$. This suggests the form of an auxiliary linearized equation.
Using  the same arguments as in the proof of \cite[Theorem 2.1]{MMR2}, with minor changes, we have the following.

\begin{theorem}
\label{T:FPT} Consider the (functional) BVP
\begin{equation}
\begin{cases}
\Delta(a_{n} \Delta x_{n}J(\Delta x_{n}))=g(n,x), & n\in\mathbb{Z}_{m},\\
x\in S, &
\end{cases}
\label{DF}
\end{equation}
where $J: \R \to \R$ and $g:\mathbb{Z}_{m}\times\mathbb{X}\rightarrow\mathbb{R}$ are continuous
maps, and $S$ is a subset of $\ \mathbb{X}$. \newline Let $G:\,\mathbb{Z}_{m}\times\mathbb{X}^{2}\rightarrow\mathbb{R}$ be a continuous map such that
$G(k,q,q)=g(k,q)$ for all $(k,q)\in\mathbb{Z}_{m}\times\mathbb{X}$. If there
exists a nonempty, closed, convex and bounded set $\Omega\subset\mathbb{X}$ such that: \\
a) \ for any $u\in\Omega$ the problem
\begin{equation}
\begin{cases}
\Delta(a_{n} J(\Delta u_{n}) \Delta y_{n})=G(n,y,u), & n\in\mathbb{Z}_{m},\\
y\in S, &
\end{cases}
\label{DF1}
\end{equation}
has a unique solution $y= T(u)$;\\
b) \ $T(\Omega)\subset \Omega$;\\
c) \ $\overline{T(\Omega)} \subset S$,\\
then \eqref{DF} has at least one solution.
\end{theorem}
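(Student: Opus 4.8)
The plan is to recognize the assignment $u\mapsto T(u)$ coming from hypothesis~a) as a continuous self-map of the compact convex set $\Omega$ and to invoke the Schauder-Tychonoff fixed point theorem, following closely the scheme of \cite[Theorem 2.1]{MMR2}. First I would note that $\mathbb{X}$, with the topology of pointwise convergence on $\mathbb{Z}_m$, is a (metrizable) Fr\'echet space in which, by the discrete Arzel\`{a}--Ascoli theorem, every bounded set is relatively compact; hence the closed, convex and bounded set $\Omega$ is compact and convex. By~a) the operator $T\colon\Omega\to\mathbb{X}$ is well defined and single-valued, and by~b) it satisfies $T(\Omega)\subset\Omega$. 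Thus the only substantial point is the continuity of $T$ on $\Omega$; once this is shown, Schauder-Tychonoff produces a fixed point $x=T(x)\in\Omega$, and then $x$ solves \eqref{DF1} with $u=x$, so that $\Delta(a_nJ(\Delta x_n)\Delta x_n)=G(n,x,x)=g(n,x)$ while $x\in S$ because $T(x)\in S$; that is, $x$ is a solution of \eqref{DF}.

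To prove continuity I would argue sequentially, the topology being metrizable. Let $u^{(j)}\to u$ in $\mathbb{X}$ and put $y^{(j)}=T(u^{(j)})\in\Omega$. Since $\Omega$ is compact it suffices to check that every subsequence of $\{y^{(j)}\}$ admits a further subsequence converging to $T(u)$. Given such a subsequence, extract once more so that $y^{(j_\ell)}\to\bar y\in\overline{T(\Omega)}$. Passing to the limit in
\[
\Delta\bigl(a_n\,J(\Delta u^{(j_\ell)}_n)\,\Delta y^{(j_\ell)}_n\bigr)=G(n,y^{(j_\ell)},u^{(j_\ell)}),
\]
which is legitimate because for each fixed $n$ only finitely many values of the sequences are involved and $a$, $J$, $G$ are continuous, one obtains that $\bar y$ satisfies $\Delta(a_nJ(\Delta u_n)\Delta\bar y_n)=G(n,\bar y,u)$ on $\mathbb{Z}_m$; moreover $\bar y\in S$ by hypothesis~c). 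Hence $\bar y$ solves \eqref{DF1} with datum $u$, and the uniqueness asserted in a) forces $\bar y=T(u)$. A standard subsequence argument then gives $y^{(j)}\to T(u)$, i.e. $T$ is continuous, and the proof is complete by the first paragraph.

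The step I expect to be the main obstacle is precisely this continuity argument, and within it the handling of the side constraint $y\in S$: the limit $\bar y$ of a sequence in $T(\Omega)$ need not a priori belong to $S$ unless $\overline{T(\Omega)}\subset S$, which is exactly why hypothesis~c) is stated in closed form rather than merely as $T(\Omega)\subset S$. The remaining work is routine: checking that the ``minor changes'' with respect to \cite[Theorem 2.1]{MMR2} amount only to allowing the continuous nonlinearity $J$ inside the quasi-derivative and to keeping track that continuity of $g$ and $G$ is understood with respect to the product topology on $\mathbb{X}$, so that the pointwise passage to the limit above is valid.
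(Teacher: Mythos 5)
Your proposal is correct and follows essentially the same route as the paper: both prove continuity of $T$ by extracting a convergent subsequence from $\{T(u^{(j)})\}$ via compactness of $\overline{T(\Omega)}$, passing to the limit in the linearized equation using continuity of $J$ and $G$, invoking hypothesis c) to place the limit in $S$, and concluding from the uniqueness in a) that the limit is $T(u)$, before applying the Schauder--Tychonoff theorem. If anything, your write-up is slightly more explicit than the paper's about why convergence of a subsequence upgrades to convergence of the full sequence, and about why c) must be stated for the closure of $T(\Omega)$.
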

\begin{proof} We briefly summarize the main arguments, for reader's convenience, which are a minor modification of the ones in \cite[Theorem 2.1]{MMR2}.

Let us show that the operator $T:\Omega\to \Omega$ is continuous with relatively compact image. The relatively compactness of $T(\Omega)$ follows immediately from b), since $\Omega$ is bounded. To prove the continuity of $T$ in $\Omega$, let $\{u^j\}$ be a sequence in $\Omega$, $u^j \to u^\infty \in \Omega$, and let $v^j=T(u^j)$. Since $T(\Omega)$ is relatively compact, $\{v^j\}$ admits a subsequence (still indicated with $\{v^j\}$) which is convergent to $v^\infty$, with $v^\infty \in S$ from c). Since $J, G$ are continuous on their domains, we obtain
\[
0= \Delta(a_{n} J(\Delta u^j_{n}) \Delta v^j_{n})-G(n,v^j,u^j) \to \Delta(a_{n} J(\Delta u^\infty_{n}) \Delta v^\infty_{n})-G(n,v^\infty,u^\infty)
\]
as $j\to\infty$.
The uniqueness of the solution of \eqref{DF1} implies $v^\infty=T(u^\infty)$, and therefore $T$ is continuous. By the Schauder-Tychonoff fixed point theorem, $T$ has at least one fixed point in $\Omega$, which is clearly a solution of \eqref{DF}.
\end{proof}

\medskip

\begin{proof}[\bf Proof of Theorem \ref{Tmain}.]
Let $z$ be the recessive solution of \eqref{L2} such that $z_m=c$, $z_k>0$, $\Delta z_k\le 0$, $k\in\Z_m$; the existence of a recessive solution with these properties follows from Corollary~\ref{corol}.
Further, from Lemma~\ref{L:4}, we have $\lim_{k\to\infty}z_k=0$.

Define the set $\Omega$ by
$$
\Omega=\left\{u\in\X : 0\le u_k\le c\prod_{j=m}^{k-1}\left(1+M\frac{\Delta z_j}{z_j}\right),k\in\Z_m \right\},
$$
where $\X$ is the Fr\'echet space of all real sequences defined on $\Z_m$, endowed with the topology of pointwise convergence on $\Z_m$, and $M=1/\sqrt{1+c^2}\in(0,1)$. Clearly $\Omega$ is a closed, bounded and convex  subset of $\X$.

For any $u\in\Omega$, consider the following BVP
\begin{equation}
\label{L1}
\begin{cases}
\Delta\left(\dfrac{a_k}{\sqrt{1+(\Delta u_k)^2}} \, \Delta y_k\right)
+b_k \tilde F(u_{k+1}) y_{k+1}=0, & k \in \Z_m, \displaystyle\\[2mm]
y \in S \displaystyle
\end{cases}
\end{equation}
where
\[
\tilde F(v)=\frac{F(v)}{v} \quad \text{for } v > 0, \quad
\tilde F(0)= \lim_{v \to 0^+} \frac{F(v)}{v}
\]
is continuous on $\R^{+}$, due to assumption \eqref{F}, and
$$
S=\left\{y\in\X: y_m=c,\, y_k>0, \Delta y_k \leq 0 \text{ for }k\in\Z_m, \,\sum_{j=m}^{\infty} \frac{1}{a_j y_j y_{j+1}}=\infty \right\}.
$$
Since $0\le u_k\le c$, for every $u \in \Omega$, we have $-c\le\Delta u_k\le c$, and so $(\Delta u_k)^2\le c^2$. Therefore,
\[
\frac{1}{\sqrt{1+(\Delta u_k)^2}}\ge \frac{1}{\sqrt{1+c^2}}
\]
for every $u \in \Omega$ and $k\in\Z_m$. Further
$\tilde F(u_{k+1})\le L_c$ for $u \in \Omega$, and hence \eqref{L2} is Sturm majorant for the linear equation in \eqref{L1}. Let $\widehat{y}=\widehat{y}(u)$ be the recessive solution of the equation in \eqref{L1} such that $\widehat{y}_m=c$. Then $\widehat{y}$ is positive nonincreasing on $\Z_m$ by Theorem~\ref{Trec}, and, in view of $\widehat{y}_m=c$ and the uniqueness of recessive solutions up to the constant factor, $\widehat y $ is the unique solution of \eqref{L1}. Define the operator $\mathcal{T}:\Omega\to\X$ by $(\mathcal{T} u)_k=\widehat{y}_k$ for $u\in\Omega$.

From Theorem~\ref{Trec}, we get
$$
\frac{a_k\Delta\widehat{y}_k}{\widehat{y}_k}\le
\frac{a_k\Delta\widehat{y}_k}{\widehat{y}_k\sqrt{1+(\Delta u_k)^2}}\le
\frac{a_kM\Delta z_k}{z_k}\le 0,
$$
which implies $\Delta\widehat{y}_k/\widehat{y}_k\le M\Delta z_k/z_k$, $k\in\Z_m$. By Lemma~\ref{L:A},
\[
\widehat y_k\le c\prod_{j=m}^{k-1}\left (1+M \frac{\Delta z_j}{z_j}\right), \quad k\in\Z_m,
\]
which yields $\mathcal{T}(\Omega)\subseteq\Omega$.

Next we show that
$\overline{\mathcal{T}(\Omega})\subseteq S$. Let $\overline{y}\in\overline{\mathcal{T}(\Omega})$. Then there exists
$\{u^j\}\subset\Omega$ such that $\{\mathcal{T}u^j\}$  converges to $\overline{y}$ (in the topology of $\X$).  It is not restrictive to assume $\{u^j\}\to \bar u \in \Omega$ since $\Omega$ is compact.
Since $\mathcal{T}u^j=:\widehat{y}^j$ is the (unique) solution of \eqref{L1}, we have
$\widehat{y}_m^j=c$,  $\widehat{y}^j_k>0$ and $\Delta\widehat{y}^j_k\le 0$ on $\Z_m$ for every $j\in\N$.
Consequently, $\overline{y}_m=c$, $\overline{y}_k\geq 0$, $\Delta\overline{y}_k\le 0$
for $k\in\Z_m$. Further, since $\tilde F$ is continuous, $\overline{y}$ is a solution of the equation in \eqref{L1} for  $u=\overline{u}$. Suppose now that there is
$T\in\Z_m$ such that $\overline{y}_T=0$. Then clearly $\Delta \overline{y}_T=0$ and by the global existence and uniqueness
of the initial value problem associated to any linear equation, we get $\overline{y}\equiv 0$ on $\Z_m$, which contradicts to $\overline{y}_m=c>0$. Thus $\overline{y}_k > 0$ for all $k\in\Z_m$.

We have just to prove that $\sum_{j=m}^{\infty}(a_j\overline{y}_j\overline{y}_{j+1})^{-1}=\infty$.
In view of
Lemma~\ref{L:4}, there exists $N>0$ such that $\overline{y}_k\le N\sum_{j=k}^{\infty} a_j^{-1}$ on $\Z_m$.
Noting that
$$
\Delta\left(\frac{1}{\sum_{j=k}^\infty a_j^{-1}} \right)
=\frac{1}{a_k\sum_{j=k}^\infty a_j^{-1}\sum_{j=k+1}^{\infty}a_j^{-1}},
$$
we obtain
\begin{align*}
\sum_{j=m}^{k-1}\frac{1}{a_j\overline{y}_j\overline{y}_{j+1}}\ge
&\sum_{j=m}^{k-1}\frac{1}{N^2a_j\sum_{i=j}^\infty a_i^{-1}\sum_{i=j+1}^{\infty}a_i^{-1}}=\frac{1}{N^2}\sum_{j=m}^{k-1} \Delta \left(\frac{1}{\sum_{i=j}^\infty a_i^{-1}}\right)\\
=&\frac{1}{N^2}\left(\frac{1}{\sum_{j=k}^\infty a_j^{-1}}-\frac{1}{\sum_{j=m}^\infty a_j^{-1}}\right)
\to\infty \text{ as }k\to\infty.
\end{align*}
 Thus  $\overline{y}\in S$, i.e., $\overline{\mathcal{T}(\Omega})\subseteq S$. By applying Theorem~\ref{T:FPT}, we obtain that the problem
\begin{equation*}
\begin{cases}
\Delta\left(a_k\dfrac{\Delta x_k}{\sqrt{1+(\Delta x_k)^2}}\right)
+b_kF(x_{k+1})=0,\quad k \in \Z_m,\\	
x \in S
\end{cases}
\end{equation*}
has at least a solution $\bar x \in \Omega$.
From the definition of the set $\Omega$,
\[
\overline{x}_k\le c\prod_{j=m}^{k-1}\left(1+M\frac{\Delta z_j}{z_j}\right)
\]
and since $M\in(0,1)$ and $\lim_{k\to\infty}z_k=0$, we have
\[
\lim_{k\to\infty}\prod_{j=m}^{k-1}\left(1+M\frac{\Delta z_j}{z_j}\right)=0
\]
by Lemma~\ref{L:B}. Thus $\bar x_k \to 0$ as $k \to \infty$, and  $\bar x$ is a solution of the BVP \eqref{E}-\eqref{BVP}.
\end{proof}

\begin{proof}[\bf Proof of Corollary \ref{c:1}.] Assume that \eqref{L2} has a positive decreasing solution for $c=c_0>0$, and let $c_1\in (0, c_0)$. Then equation \eqref{L2} with $c=c_0$ is a Sturm majorant of \eqref{L2} with $c=c_1$, and from Theorem~\ref{Trec}, equation \eqref{L2} with $c=c_1$ has a positive decreasing solution. The application of Theorem \ref{Tmain} leads to the existence of a solution of \eqref{E}-\eqref{BVP} for $c=c_1$.
\end{proof}

Effective criteria for the solvability of BVP (\ref{E})-(\ref{BVP})  can be obtained by considering as a Sturm majorant of \eqref{L2} any linear equation that is known to have a global positive solution.

In the continuous case, a typical approach to obtaining global positivity of solutions
for equation
\begin{equation}\label{ER}
(t^{2}y')' + \gamma y=0,\quad t\geq 1,
\end{equation}
where $0<\gamma\leq 1/4$, is based on the Sturm theory. In virtue of the transformation $x=t^{2}y'$, this equation is equivalent  to the Euler equation
\begin{equation}\label{ERorig}
x''+\frac{\gamma}{t^{2}}x=0, \quad t\geq 1,
\end{equation}
whose general solutions are well-known.

In the discrete case, various types of Euler equations are considered  in the literature, see, e.g. \cite{HV,Naoto} and references therein.
%As far as to our knowledge,
It is somehow problematic to find a solution for some
natural forms of discrete Euler equations in the self-adjoint form \eqref{Lmin}.

Here our aim is to deal with solutions of Euler type equations.
\vskip4mm

\begin{lemma}\label{LeEuR}
The equation
\begin{equation}\label{EuR}
\Delta\bigl ((k+1)^{2}\Delta x_k \bigr) + \frac{1}{4}  x_{k+1}=0
\end{equation}
has a recessive solution which is positive decreasing on $\N$.
\end{lemma}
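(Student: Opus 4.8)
The plan is to \emph{not} try to solve \eqref{EuR} directly: its characteristic relation $\gamma(\gamma+1)+\tfrac14=0$ has the double root $\gamma=-\tfrac12$, the discrete analogue of the resonant continuous case $(t^{2}y')'+\tfrac14 y=0$, so no elementary closed-form solution is to be expected. Instead I would exhibit an explicit positive \emph{decreasing} solution of a self-adjoint Euler-type equation that is a Sturm \emph{majorant} of \eqref{EuR}, and then transfer positivity and monotonicity to the recessive solution of \eqref{EuR} via Theorem~\ref{Trec}.

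Concretely, take the central binomial ratio $x_k=4^{-k}\binom{2k}{k}$. Then $x_{k+1}/x_k=(2k+1)/(2k+2)\in(0,1)$, so $x$ is positive and strictly decreasing on $\N$, and $\Delta x_k=-x_k/(2(k+1))$, whence $\bigl(k+\tfrac12\bigr)(k+1)\Delta x_k=-\tfrac14(2k+1)x_k$. A short telescoping computation then yields
\begin{equation*}
\Delta\!\left(\bigl(k+\tfrac12\bigr)(k+1)\,\Delta x_k\right)=-\tfrac{2k+1}{8(k+1)}\,x_k=-\tfrac14\,x_{k+1},
\end{equation*}
so $x$ solves the equation
\begin{equation*}
\Delta\!\left(\bigl(k+\tfrac12\bigr)(k+1)\,\Delta x_k\right)+\tfrac14\,x_{k+1}=0,\qquad k\in\N,
\end{equation*}
which I will call $\mathcal B$. (This is the case $a=\tfrac12$ of the family $\Delta((k+a)(k+a+\tfrac12)\Delta x_k)+\tfrac14 x_{k+1}=0$, which is solved by the gamma ratio $\Gamma(k+a)/\Gamma(k+a+\tfrac12)$.) Since $\mathcal B$ has the positive solution $x$, it is nonoscillatory.

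Now $\bigl(k+\tfrac12\bigr)(k+1)=(k+1)^2-\tfrac12(k+1)<(k+1)^2$ while the coefficients multiplying $x_{k+1}$ agree, so $\mathcal B$ is a Sturm majorant of \eqref{EuR} in the sense of Section~\ref{S3} ($R_k=(k+\tfrac12)(k+1)\le(k+1)^2=r_k$ and $P_k=p_k=\tfrac14$); in particular \eqref{EuR} is nonoscillatory as well. Applying Theorem~\ref{Trec} with $\mathcal B$ in the role of \eqref{Lmaj}, \eqref{EuR} in the role of \eqref{Lmin}, and $x_k=4^{-k}\binom{2k}{k}$ as the positive decreasing solution of the majorant, we get a recessive solution $u$ of \eqref{EuR} that is positive on $\N$ and, by the monotonicity clause of that theorem, decreasing on $\N$. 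This is exactly the assertion of the lemma.

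The only genuinely non-routine step is the first one: recognising that, although the critical discrete Euler equation \eqref{EuR} has no usable explicit solution, the neighbouring self-adjoint equation $\mathcal B$, whose leading coefficient $(k+\tfrac12)(k+1)$ sits just below $(k+1)^2$, is solved in closed form by $4^{-k}\binom{2k}{k}\sim(\pi k)^{-1/2}$. Once $\mathcal B$ and this solution are in hand, the remainder is a direct application of the comparison results of Section~\ref{S3}, with no further computation of substance.
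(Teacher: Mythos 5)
Your proof is correct and is essentially the paper's own argument: your majorant $\mathcal B$ is exactly the paper's equation \eqref{Eunew2} divided by $4$, and your explicit solution $x_k=4^{-k}\binom{2k}{k}=\frac{(2k-1)!!}{(2k)!!}$ is the same sequence the paper produces as $\Delta y_k$ with $y_k=\prod_{j=1}^{k-1}\frac{2j+1}{2j}$. The only differences are presentational (closed-form binomial expression versus a telescoping product, and the normalization of the majorant), and the final step via Theorem~\ref{Trec} is identical.
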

\begin{proof}
Consider the sequence
\[
y_k=\prod_{j=1}^{k-1}\frac{2j+1}{2j}, \quad k\ge1,
\]
with the usual convention $\prod_{j=1}^0u_j=1$. One can verify that
$y$ is a positive increasing solution of the equation
\begin{equation} \label{new-eu}
\Delta^2y_k+\frac{1}{2(k+1)(2k+1)}y_{k+1}=0
\end{equation}
on $\N$.

Set $x_k=\Delta y_k$. Then $x$ is a positive decreasing solution of the equation
\begin{equation}\label{Eunew2}
\Delta(2(k+1)(2k+1)\Delta x_k)+x_{k+1}=0
\end{equation}
on $\N$.
Obviously,
\[
2(k+1)(2k+1)\leq 4(k+1)^{2}, \quad k\geq 1,
\]
thus \eqref{Eunew2} is a Sturm majorant of \eqref{EuR}.
By Theorem \ref{Trec}, \eqref{EuR} has a recessive solution which is positive decreasing  on $\N$.
\end{proof}

Equation \eqref{EuR} can be understand as the reciprocal equation to the Euler difference equation
\begin{equation} \label{discr_euler}
\Delta^2 u_k+\frac{1}{4(k+1)^2}u_{k+1}=0,
\end{equation}
i.e., these equations are related by the substitution relation $u_k=d\Delta x_k$, $d\in\R$, where $u$ satisfies
\eqref{discr_euler} provided $x$ is a solution of \eqref{EuR}.
The form of \eqref{discr_euler} perfectly fits the discretization of the differential equation \eqref{ERorig} with $\gamma=1/4$, using the usual central difference scheme.

\begin{corollary}
\label{Cor1} Let (H$_{i}$), i=1,2,3, be satisfied and  $L_c$ be defined by \eqref{Lc}.
The BVP (\ref{E})-(\ref{BVP}) has at least one solution if  there exists
$\lambda>0$ such that  for $k\geq 1$
\begin{equation}\label{cc}
a_k\geq 4 \lambda (k+1)^{2},\quad \sqrt{1+c^{2}} \,L_c\, b_k \leq\lambda.
\end{equation}
\end{corollary}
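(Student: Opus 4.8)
The plan is to exhibit a Sturm majorant of the linear equation \eqref{L2} that carries a positive decreasing solution, and then to apply Theorem~\ref{Trec} and Theorem~\ref{Tmain}. Fix $\lambda>0$ satisfying \eqref{cc} and consider the auxiliary equation
\begin{equation*}
\Delta\left(\frac{4\lambda}{\sqrt{1+c^{2}}}\,(k+1)^{2}\,\Delta x_k\right)+\frac{\lambda}{\sqrt{1+c^{2}}}\,x_{k+1}=0,
\end{equation*}
which is nothing but equation \eqref{EuR} multiplied by the positive constant $4\lambda/\sqrt{1+c^{2}}$. By Lemma~\ref{LeEuR} it therefore has a recessive solution that is positive and decreasing on $\N$, and in particular it is nonoscillatory.

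I would then check that this auxiliary equation is a Sturm majorant of \eqref{L2}. Writing \eqref{L2} in the self-adjoint form $\Delta(r_k\Delta z_k)+p_kz_{k+1}=0$ with $r_k=a_k/\sqrt{1+c^{2}}$ and $p_k=L_cb_k$ (so that $r_k>0$ and $p_k\ge0$ by (H$_1$)--(H$_3$)), and the auxiliary equation as $\Delta(R_k\Delta x_k)+P_kx_{k+1}=0$ with $R_k=4\lambda(k+1)^{2}/\sqrt{1+c^{2}}$ and $P_k=\lambda/\sqrt{1+c^{2}}$, the two conditions in \eqref{cc} say exactly $0<R_k\le r_k$ (first part, after dividing by $\sqrt{1+c^{2}}$) and $P_k\ge p_k\ge0$ (second part, rearranged). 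Hence the majorant relation holds on $\Z_m$ (the relevant range being $k\ge1$, consistently with how the hypotheses are phrased).

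Finally, Theorem~\ref{Trec} applied with the auxiliary equation in the role of \eqref{Lmaj} and \eqref{L2} in the role of \eqref{Lmin} yields a recessive solution of \eqref{L2} that is positive on $\Z_m$ and, since the majorant possesses a positive decreasing solution, decreasing on $\Z_m$; thus \eqref{L2} has a positive decreasing solution and Theorem~\ref{Tmain} produces a solution of the BVP \eqref{E}--\eqref{BVP}. I expect no serious obstacle: the argument merely assembles the machinery of Sections~\ref{S3}--\ref{S4}. The one point requiring care is the choice of the scaling factor $4\lambda/\sqrt{1+c^{2}}$, calibrated so that the coefficient comparison reproduces \eqref{cc} verbatim, together with the (automatic) remark that the rescaled Euler equation is nonoscillatory, which is what legitimizes the use of the Sturm comparison results.
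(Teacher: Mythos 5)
Your proposal is correct and follows essentially the same route as the paper: rescale the Euler equation \eqref{EuR} from Lemma~\ref{LeEuR}, observe that \eqref{cc} makes it a Sturm majorant of \eqref{L2}, and conclude via Theorem~\ref{Trec} and Theorem~\ref{Tmain}. The only cosmetic difference is that you place the normalizing factor $1/\sqrt{1+c^{2}}$ on the majorant, whereas the paper effectively clears it from \eqref{L2}; since multiplying a self-adjoint equation by a positive constant does not change its solutions, the two bookkeeping choices coincide.
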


\begin{proof}
Consider the equation \eqref{EuR}.
By Lemma \ref{LeEuR}, it has a positive decreasing solution on $\N$.
 The same trivially holds for the equivalent equation
\begin{equation}
\Delta\bigl (4 \lambda(k+1)^{2}\Delta x_k \bigr) +  \lambda x_{k+1}=0.
\label{Re}
\end{equation}
Since \eqref{cc} holds, \eqref{Re}  is a Sturm majorant of \eqref{L2}, and
by Theorem \ref{Trec}, equation \eqref{L2} has a positive decreasing solution on $\N$. Now the conclusion follows from Theorem \ref{Tmain}.
\end{proof}

\noindent{\textbf{Remark.}} Note that the sequence $b$ does not need to be bounded. For example,
consider as a Sturm majorant  of \eqref{L2} the equation
\begin{equation*}
\Delta\bigl(\lambda k 2^{k+1}\Delta x_k \bigr)+ \lambda 2^{k+1} x _{k+1}=0,\text{ \ \ }k\geq 0.
\end{equation*}
One can check that this equation has the solution $x_k=2^{-k}$.
This leads to the conditions
\[
a_k\geq \lambda k 2^{k+1}, \quad \ \sqrt{1+c^{2}} \,L_c\, b_k \leq\lambda 2^{k+1}\quad \text{for } k\geq 0
\]
ensuring the solvability of the BVP (\ref{E})-(\ref{BVP}).

Another criteria can be obtained by considering the equation
\begin{equation*}
\Delta\bigl( \lambda k^{3}\Delta x_k \bigr)+ \lambda \frac{k^{2}+3k+1}{k+2}x _{k+1}=0,\text{ \ \ }k\geq1
\label{Ex4}
\end{equation*}
having the solution $x_k=1/k$. This comparison with \eqref{L2} leads to the conditions
\[
a_k\geq \lambda k^{3},\quad \sqrt{1+c^{2}} \,L_c\, b_k \leq\lambda \frac{k^{2}+3k+1}{k+2}\quad \text{for }k\geq 1\, .
\]

The following example illustrates our result.

\begin{example}\label{Ex2} Consider the BVP
\begin{equation}
\begin{cases}
\Delta\bigl( (k+1)^{2}\Phi(\Delta x_k )\bigr)+  \dfrac{|\sin k|}{4\sqrt{2}\,k}\text{
}x^{3}_{k+1}=0,\text{ \ \ }k\geq1,\\[2mm]
x_1=c,\quad x_k>0,\quad \Delta x_k\le 0,
 \quad \displaystyle \lim_{k\to\infty}x_k=0.
\label{Ex1}
\end{cases}
\end{equation}

We have $L_c=c^2$, $a_k=(k+1)^2$, and
$b_k= \dfrac{|\sin k|}{4\sqrt{2}\,k}$.
Conditions in \eqref{cc} are fulfilled for any $c\in(0,1]$  when taking $\lambda=1/4$.
Indeed,
$$
a_k=(k+1)^2
=4\lambda(k+1)^2$$
and
$$
\sqrt{1+c^2}L_cb_k=\sqrt{1+c^2}c^2 b_k\le\sqrt{2}b_k
\le\frac{1}{4}|\sin k|\le \frac{1}{4}=\lambda.
$$
Corollary \ref{Cor1} now guarantees solvability of the BVP \eqref{Ex1} for any $c\in (0,1]$.
\end{example}

\section{Comments and open problems}\label{S5}

It is interesting to compare our discrete BVP with the continuous one investigated in  \cite{Trieste}.
Here  the BVP for the differential equation with the Euclidean mean curvature operator
\begin{equation}
\begin{cases}
\left(  a(t)\dfrac{x^{\prime}}{\sqrt{1+x^{\prime}{}^{2}}}\right)  ^{\prime
}+b(t)F(x)=0,\qquad t\in\lbrack1,\infty),\\
x(1)=1,\,x(t)>0,\,x^{\prime}(t)\leq0\text{ for }t\geq1,\,\displaystyle\lim
_{t\rightarrow\infty}x(t)=0,
\end{cases}
\tag{P}
\label{EC}
\end{equation}
has been considered. Sometimes solutions of differential equations satisfying the condition
\begin{equation*}
x(t)>0, \quad x'(t)\leq 0\, ,\quad  t\in[1,\infty),
\end{equation*}
are called \textit{Kneser solutions} and the problem to find such solution is called \textit{Kneser problem}.

The problem (\ref{EC}) has been studied under the following conditions:
\begin{itemize}
\item[(C$_{1}$)] The function $a$ is continuous on $[1,\infty)$, $a(t)>0$ in
$[1,\infty)$, and
\begin{equation*}
\int_{1}^{\infty}\frac{1}{a(t)}\,dt<\infty. \label{a}
\end{equation*}
\item[(C$_{2}$)] The function $b$ is continuous on $[1,\infty)$, $b(t)\geq0$
and
\begin{equation*}
\int_{1}^{\infty}b(t)\,\int_{t}^{\infty}\frac{1}{a(s)}dsdt<\infty. \label{B}
\end{equation*}
\item[(C$_{3}$)] The function $F$ is continuous on $\mathbb{R}$, $F(u)u>0$ for
$u\neq0$, and such that
\begin{equation}
\limsup_{u\rightarrow0^{+}}\frac{F(u)}{u}<\infty.\label{FF}
\end{equation}
\end{itemize}

The main result for solvability of \eqref{EC} is the following. Note that the principal solution for linear differential equation is defined similarly as the recessive solution, see e.g.  \cite{Trieste,H}.

\begin{theorem}
\label{Th second}{\rm\cite[Theorem 3.1]{Trieste}} Let (C$_{i}$), i=1,2,3, be verified and
\begin{equation*}
L=\underset{u\in(0,1]}{\sup}\frac{F(u)}{u}.\label{Fbar}
\end{equation*}
Assume
\begin{equation*}
\alpha=\inf_{t\geq1}\text{ }a(t)A(t)>1, \label{New}
\end{equation*}
where
\begin{equation*}
A(t)=\int_{t}^{\infty}\frac{1}{a(s)}\,ds. \label{A}
\end{equation*}
If the principal solution $z_{0}$ of the linear equation
\begin{equation*}
\left(  a(t)z^{\prime}\right)  ^{\prime}+\frac{\alpha}{\sqrt{\alpha^{2}-1}
}L \,b(t)z=0,\quad t\geq1, \label{EM1}
\end{equation*}
is positive and nonincreasing on $[1,\infty)$, then the BVP (\ref{EC}) has at
least one solution.
\end{theorem}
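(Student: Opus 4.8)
The plan is to transport to the half-line $[1,\infty)$ the linearization-and-fixed-point scheme used above for Theorem~\ref{Tmain}. I would work in the Fr\'echet space of $C^{1}$ functions on $[1,\infty)$ with the topology of uniform convergence of functions and first derivatives on compact subintervals, and apply the continuous Schauder-type existence theorem \cite[Theorem~1.3]{CFM} (the prototype of Theorem~\ref{T:FPT}). Writing the mean curvature operator as $\Phi(v)=vJ(v)$ with $J(v)=1/\sqrt{1+v^{2}}$ and $\lim_{v\to0}J(v)=1$, I associate to \eqref{EC} the linearized equation $(a(t)J(u'(t))y')'+b(t)\tilde F(u(t))y=0$, where $\tilde F(v)=F(v)/v$ for $v>0$, extended continuously at $0$ by (C$_{3}$). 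I normalize the principal solution $z_{0}$ by $z_{0}(1)=1$; by hypothesis it is positive and nonincreasing, and the continuous counterpart of Lemma~\ref{L:4} gives $\lim_{t\to\infty}z_{0}(t)=0$.

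Next I set $M=\sqrt{\alpha^{2}-1}/\alpha\in(0,1)$ and $\beta=1/\sqrt{\alpha^{2}-1}$ --- both finite precisely because $\alpha>1$ --- and define
\[
\Omega=\Bigl\{u\in C^{1}[1,\infty):\ 0\le u(t)\le z_{0}(t)^{M}\ \text{and}\ |u'(t)|\le\beta\ \text{for }t\ge1\Bigr\},
\]
a nonempty ($0\in\Omega$), closed, bounded, convex subset of the Fr\'echet space. Unlike the discrete situation of Section~\ref{S2}, the derivative constraint must be imposed explicitly; it also renders $\Omega$ equicontinuous, so that relative compactness of $T(\Omega)$ will follow from the Arzel\`a--Ascoli theorem. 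For $u\in\Omega$ one has $0<u\le z_{0}^{M}\le1$, hence $\tilde F(u)\le L$, and $|u'|\le\beta$ gives $a(t)J(u'(t))\ge(\sqrt{\alpha^{2}-1}/\alpha)\,a(t)$; therefore $((\sqrt{\alpha^{2}-1}/\alpha)a(t)y')'+Lb(t)y=0$, equivalently $(a(t)z')'+\frac{\alpha}{\sqrt{\alpha^{2}-1}}Lb(t)z=0$, is a Sturm majorant of the linearized equation, with $z_{0}$ as its principal solution. By the continuous analogue of Theorem~\ref{Trec} (Sturm comparison of principal solutions, cf. \cite{H}), the principal solution $\widehat y=\widehat y(u)$ of the linearized equation with $\widehat y(1)=1$ is then positive nonincreasing, tends to $0$, is the unique solution of the associated linear BVP, and satisfies $\widehat y'/\widehat y\le M z_{0}'/z_{0}$, whence $\widehat y(t)\le z_{0}(t)^{M}$ on $[1,\infty)$.

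It remains to check the three conditions of the fixed point theorem for $Tu=\widehat y$. The sup-bound just obtained is half of $T(\Omega)\subseteq\Omega$; the other half is the a priori derivative estimate, which I would get from the monotonicity of $r(t)\widehat y'(t)$, $r=aJ(u')$: this yields $-\widehat y'(t)\le \widehat y(t)/\bigl(r(t)\int_{t}^{\infty}r^{-1}\,ds\bigr)$, and estimating $r(t)\ge(\sqrt{\alpha^{2}-1}/\alpha)a(t)$, $\int_{t}^{\infty}r^{-1}\,ds\ge A(t)$ and $a(t)A(t)\ge\alpha$ gives $|\widehat y'(t)|\le\frac{\alpha}{\sqrt{\alpha^{2}-1}}\cdot\frac{1}{a(t)A(t)}\le\frac{1}{\sqrt{\alpha^{2}-1}}=\beta$. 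The inclusion $\overline{T(\Omega)}\subset S$, where $S$ encodes $y(1)=1$, $y>0$, $y'\le0$ and the integral characterization of the principal solution, is verified exactly as in the proof of Theorem~\ref{Tmain}: given $u^{j}\to\bar u$ in $\Omega$ and $\widehat y^{j}=Tu^{j}\to\bar y$, the limit $\bar y$ solves the limit equation with $\bar y(1)=1$, $\bar y\ge0$, $\bar y'\le0$, uniqueness for the linear initial value problem forces $\bar y>0$, and the continuous counterpart of Lemma~\ref{L:4} supplies the decay estimates that make the defining integral diverge. The fixed point $\bar x\in\Omega$ then solves \eqref{EC}, and $\bar x(t)\le z_{0}(t)^{M}\to0$ gives the missing limit condition. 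I expect the main obstacle to be precisely the a priori derivative bound $|\widehat y'|\le\beta$: it is what forces the extra hypothesis $\alpha>1$, and it is the genuine point of departure from the discrete proof, where the bound on $\Delta u_{k}$ is automatic from $0\le u_{k}\le c$.
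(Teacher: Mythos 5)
First, a structural remark: the paper does not prove this statement at all. Theorem~\ref{Th second} is imported verbatim from \cite[Theorem 3.1]{Trieste} in Section~\ref{S5} purely for comparison with the discrete result, so there is no in-paper proof to measure your argument against. That said, your reconstruction of the linearization-plus-principal-solution scheme is essentially the one used in \cite{Trieste}, and you have correctly identified the exact mechanism by which the hypothesis $\alpha>1$ enters: the bound $|u'|\le\beta=1/\sqrt{\alpha^{2}-1}$ gives $J(u')\ge\sqrt{\alpha^{2}-1}/\alpha$, which makes $(a z')'+\frac{\alpha}{\sqrt{\alpha^{2}-1}}Lb\,z=0$ a Sturm majorant of the linearized equation, and the a priori estimate $-\widehat y'(t)\le \widehat y(t)\bigl(r(t)\int_{t}^{\infty}r^{-1}\,ds\bigr)^{-1}\le\bigl(M\,a(t)A(t)\bigr)^{-1}\le\beta$ closes the loop. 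Those computations check out, and the continuous integration of $\widehat y'/\widehat y\le M z_0'/z_0$ to get $\widehat y\le z_0^{M}$ is exactly the step that, as Section~\ref{S2} explains, is available here but fails in the discrete setting.

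There is, however, one genuine gap relative to the theorem as stated. Hypothesis (C$_{3}$) assumes only $\limsup_{u\to0^{+}}F(u)/u<\infty$ (condition \eqref{FF}); it does not guarantee that $\lim_{u\to0^{+}}F(u)/u$ exists, so your continuous extension $\tilde F(0)=\lim_{v\to0^{+}}F(v)/v$ need not be defined, and the coefficient $b(t)\tilde F(u(t))$ of the linearized equation is then not a continuous (or even well-defined) function of $u\in\Omega$ for those $u$ that vanish somewhere --- and your $\Omega$ contains $u\equiv0$. This is precisely the discrepancy the paper flags in Open Problem (2) of Section~\ref{S5}: the proof in \cite{Trieste} takes $\Omega$ with a strictly positive lower barrier, so that $\tilde F$ is only ever evaluated at positive arguments and the weaker $\limsup$ condition suffices. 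As written, your argument proves the theorem only under the stronger hypothesis \eqref{F} (existence of the limit), i.e.\ the discrete-case assumption (H$_{3}$). Repairing it requires inserting a positive lower bound $h(t)\le u(t)$ into the definition of $\Omega$ and then proving the matching lower estimate $\widehat y\ge h$ for the principal solution of the linearized equation --- an additional comparison argument (from below, against a Sturm minorant) that your proposal does not supply.
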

It is worth to note that the method used in \cite{Trieste} does not allow that $\alpha=1$ and thus Theorem \ref{Th second} is not immediately applicable when $a(t)=t^{2}$.
In \cite{Trieste} there are given several effective criteria for the solvability of the BVP \eqref{EC} which are similar to Corollary \ref{Cor1}. An example, which can be viewed
as a  discrete counterpart, is the above Example \ref{Ex2}.

\medskip
\noindent\textbf{Open problems.}
\vskip2mm
\noindent{\textbf{(1)}}
The comparison between Theorem \ref{Tmain} for the discrete BVP and
Theorem \ref{Th second} for the continuous one, suggests to investigate the BVP \eqref{E}-\eqref{BVP} on times scales.
\vskip2mm

\noindent{\textbf{(2)}} In \cite{Trieste}, the solvability of the continuous BVP has been proved under the weaker assumption \eqref{FF} posed on $F$. This is due to the fact that the set $\Omega$  is defined using a precise lower bound which is different from zero.
 It is an open problem if a similar estimation from below can be used in the discrete case and assumption \eqref{F} can be replaced  by \eqref{FF}.
\vskip2mm

\noindent{\textbf{(3)}} Similar BVPs concerning the existence of Kneser solutions
for difference equations with \textit{p}-Laplacian operator are considered
in \cite{JDEA2016} when $b_{k}<0$ for $k\in \Z^{+}.$ It should be
interesting to extend the solvability of the BVP \eqref{E}-\eqref{BVP} to
the case in which the sequence $b$ is negative and in the more general
situation when the sequence $b$ is of indefinite sign.

% For the acknowledgements section, please do not number the section, please begin it with \section*{Acknowledgements}
\section*{Acknowledgements}

The authors thank to anonymous referee for his/her valuable comments.
\vskip2mm

The research of the first and third authors has been supported by the grant GA20-11846S of the Czech Science Foundation. The second author was partially supported by Gnampa, National Institute for Advanced Mathematics (INdAM).

\section*{Acknowledgements}
The research of the first and third author has been supported by the grant GA20-11846S of the Czech Science Foundation.  The second author is  partially supported by Gnampa, National Institute for Advanced Mathematics (INdAM).

\end{document}